\documentclass[10pt,reqno]{amsart}
\usepackage[colorlinks=true,allcolors=blue
]{hyperref}
\usepackage{color}
\usepackage{amsmath, amssymb, amsthm}

\usepackage{mathrsfs}
\usepackage{mathtools}
\usepackage[noabbrev,capitalize,nameinlink]{cleveref}
\crefname{equation}{}{}
\usepackage{fullpage}
\usepackage[noadjust]{cite}
\usepackage{graphics}
\usepackage{pifont}
\usepackage{tikz}
\usepackage{bbm}
\usepackage[T1]{fontenc}

\usetikzlibrary{arrows.meta}

\usepackage{environ}
\usepackage{framed}
\usepackage{url}
\usepackage[linesnumbered,ruled,vlined]{algorithm2e}
\usepackage[noend]{algpseudocode}
\usepackage[labelfont=bf]{caption}
\usepackage{cite}
\usepackage{framed}
\usepackage[framemethod=tikz]{mdframed}
\usepackage{appendix}
\usepackage{graphicx}
\usepackage[textsize=tiny]{todonotes}
\usepackage{tcolorbox}
\usepackage{enumerate}
\usepackage[shortlabels]{enumitem}
\allowdisplaybreaks[1]

\apptocmd{\sloppy}{\hbadness 10000\relax}{}{} % magic BibTeX spacing fix

\crefname{algocf}{Algorithm}{Algorithms}

\crefname{equation}{}{} %remove ``Equation''
\crefname{conjecture}{Conjecture}{Conjectures} %add ``Conjecture''
\AtBeginEnvironment{appendices}{\crefalias{section}{appendix}} %appendices

\let\originalleft\left
\let\originalright\right
\renewcommand{\left}{\mathopen{}\mathclose\bgroup\originalleft}
\renewcommand{\right}{\aftergroup\egroup\originalright}

\crefformat{enumi}{#2#1#3}
\crefrangeformat{enumi}{#3#1#4 to~#5#2#6}
\crefmultiformat{enumi}{#2#1#3}%
{ and~#2#1#3}{, #2#1#3}{ and~#2#1#3}

\usepackage[color,final]{showkeys} %add in 'final' into parameter to remove showkeys

\colorlet{refkey}{orange!20}
\colorlet{labelkey}{blue!30}

\crefname{algocf}{Algorithm}{Algorithms}

\usepackage{aliascnt}

\numberwithin{equation}{section}
\newtheorem{theorem}{Theorem}[section]
\crefname{theorem}{Theorem}{Theorems}
\Crefname{theorem}{Theorem}{Theorems}

\makeatletter
\newcommand{\newthmwithalias}[3]{%
  \newaliascnt{#1}{theorem}%
  \newtheorem{#1}[#1]{#2}%
  \aliascntresetthe{#1}%
  \expandafter\crefname\expandafter{#1}{#2}{#2s}%
  \expandafter\Crefname\expandafter{#1}{#2}{#2s}%
  \expandafter\def\csname #1autorefname\endcsname{#2}%
}
\makeatother

\newthmwithalias{proposition}{Proposition}{}
\newthmwithalias{lemma}{Lemma}{}
\newthmwithalias{claim}{Claim}{}
\newthmwithalias{corollary}{Corollary}{}
\crefname{corollary}{Corollary}{Corollaries}
\newthmwithalias{conjecture}{Conjecture}{}
\newthmwithalias{fact}{Fact}{}
\newthmwithalias{definition}{Definition}{}
\newthmwithalias{problem}{Problem}{}
\newthmwithalias{question}{Question}{}
\newthmwithalias{assumption}{Assumption}{}
\newthmwithalias{example}{Example}{}
\newthmwithalias{setup}{Setup}{}

\theoremstyle{remark}
\newtheorem*{remark}{Remark}

\newtheorem*{question*}{Question}
\newtheorem*{definition*}{Definition}

\crefname{section}{Section}{Sections}
\Crefname{section}{Section}{Sections}
\crefname{subsection}{Section}{Sections}
\Crefname{subsection}{Section}{Sections}
\crefname{equation}{}{} % keep equations as just numbers
\crefname{algocf}{Algorithm}{Algorithms}
\Crefname{algocf}{Algorithm}{Algorithms}

\newcommand{\mb}{\mathbb}

\newcommand{\mc}{\mathcal}

\newcommand{\on}{\operatorname}

\newcommand{\eps}{\varepsilon}

\newcommand{\hide}[1]{}

\allowdisplaybreaks

\title{Permanents of random matrices over finite fields}
\author[A1]{Zach Hunter}
\address{Department of Mathematics, ETH Z\"{u}rich, Z\"{u}rich, Switzerland.}
\email{zach.hunter@math.ethz.ch}

\author[A2]{Matthew Kwan}
\address{Institute of Science and Technology Austria (ISTA). Am Campus 1, 3400 Klosterneuburg, Austria}
\email{matthew.kwan@ist.ac.at}

\author[A3]{Lisa Sauermann}
\address{Institute for Applied Mathematics, University of Bonn, Germany}
\email{sauermann@iam.uni-bonn.de}

\thanks{
Zach Hunter was supported by SNSF grant 200021-228014. Matthew Kwan was supported by ERC Starting Grant ``RANDSTRUCT'' No.\ 101076777. Lisa Sauermann was supported by the Deutsche Forschungsgemeinschaft (DFG, German Research Foundation) -- CRC 1720 -- 539309657. 
}

\begin{document}

\begin{abstract} 
Fix a finite field $\mb F_q$ and let $A\in \mb F_q^{n\times n}$ be a uniformly random $n\times n$ matrix over $\mb F_q$. The asymptotic distribution of the determinant $\det(A)$ is well-understood, but the asymptotic distribution of the permanent $\on{per}(A)$ is still something of a mystery. In this paper we make a first step in this direction, proving that $\on{per}(A)$ is significantly more uniform than $\on{det}(A)$.
\end{abstract}

\maketitle

\section{Introduction}\label{sec:introduction}
Two important matrix parameters are the \emph{determinant}
and \emph{permanent}: for an $n\times n$ matrix $A=(a_{i,j})$, 
they are defined as
\begin{equation}
\det(A)=\sum_{\pi\in S_{n}}\operatorname{sign}(\pi)\prod_{i=1}^{n}a_{i,\pi(i)}\qquad\text{and}\qquad\operatorname{per}(A)=\sum_{\pi\in S_{n}}\prod_{i=1}^{n}a_{i,\pi(i)}.\label{eq:det-per}
\end{equation}
If we fix a finite field $\mb F_q$ and let $A\in \mb F_q^{n\times n}$ be a uniformly random $n\times n$ matrix over $\mb F_q$, it is easy to compute the limiting distribution of $\det(A)$ (this is essentially equivalent to computing the number of invertible matrices over $\mb F_q$, which goes all the way back to Galois \cite[p.\ 280]{Smith-book}).
\begin{fact}\label{fact:det}
For a prime power $q$, let 
\[
\alpha_{q}=1-\prod_{i=1}^{\infty}(1-q^{-i}).%=\frac{1}{q}+O\left(\frac{1}{q^{2}}\right).
\]
Then, for a uniformly random $n\times n$ matrix $A\in\mathbb{F}_{q}^{n\times n}$ and any $x\in \mb F_q$, we have
\[
\lim_{n\to \infty}\Pr[\det(A)=x]=\begin{cases}
\alpha_{q} & \text{if }x=0\\
(1-\alpha_{q})/(q-1) & \text{otherwise.}
\end{cases}
\]
\end{fact}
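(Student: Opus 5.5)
The plan is to compute $\Pr[\det(A)=0]$ exactly by counting invertible matrices, and then to use a symmetry argument to show that the nonzero values of $\det(A)$ are equidistributed.

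\emph{Step 1: the zero value.} The number of invertible matrices in $\mb F_q^{n\times n}$ is
\[
|\mathrm{GL}_n(\mb F_q)|=\prod_{i=0}^{n-1}(q^n-q^i).
\]
To see this, choose the rows one at a time: the $(i+1)$-st row must avoid the span of the previous $i$ rows, and that span has exactly $q^i$ elements. Dividing by $q^{n^2}$ gives
\[
\Pr[\det(A)\neq 0]=\prod_{i=0}^{n-1}(1-q^{i-n})=\prod_{j=1}^{n}(1-q^{-j}).
\]
As $n\to\infty$ this product converges to $\prod_{j\ge1}(1-q^{-j})=1-\alpha_q$. The infinite product converges to a positive limit because $\sum_j q^{-j}<\infty$. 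Hence $\Pr[\det(A)=0]\to\alpha_q$.

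\emph{Step 2: the nonzero values.} Fix $x,y\in\mb F_q^*$ and let $D=\mathrm{diag}(y/x,1,\dots,1)$. The map $A\mapsto DA$ is a bijection of $\mb F_q^{n\times n}$, since it scales the first row by a nonzero scalar. It satisfies $\det(DA)=(y/x)\det(A)$, so it maps $\{A:\det(A)=x\}$ bijectively onto $\{A:\det(A)=y\}$. Because $A$ is uniform, $\Pr[\det(A)=x]$ is therefore the same for every nonzero $x$, already for each finite $n$. Each nonzero value thus has probability
\[
\frac{\Pr[\det(A)\neq0]}{q-1}=\frac{1}{q-1}\prod_{j=1}^n(1-q^{-j}),
\]
which tends to $(1-\alpha_q)/(q-1)$.

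No step is a real obstacle. The only points to check are the row-by-row count in Step 1 and the convergence of the infinite product, both of which are standard.
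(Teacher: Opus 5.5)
Your proof is correct and is essentially the argument the paper has in mind: the paper gives no formal proof, but notes that the fact amounts to counting invertible matrices by revealing rows one at a time (your product $\prod_{j=1}^n(1-q^{-j})$). It then uses the same row-rescaling symmetry (your bijection $A\mapsto DA$) to see that all nonzero values are equally likely.
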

Notably, this limiting distribution is \emph{not} uniform over $\mb F_q$: the determinant is significantly more likely to take the value zero than any other value. For example, when $q=3$ we have $\alpha_q\approx 0.44$, which is significantly larger than $1/q=1/3$. Roughly speaking, this is because the event $\{\det(A)=0\}$ describes the rows of $A$ being linearly dependent: if we reveal $A$ row-by-row, and at some step we discover that a row lies in the span of the previous rows, this tells us that $\det(A)=0$ regardless of what happens in future rows. On the other hand, if after revealing the second-last row we still haven't discovered a linear dependence relation between the rows, then it is easy to see that the conditional distribution of $\det(A)$ is uniform over $\mb F_q$.

Unlike the determinant, the permanent does not seem to have any linear-algebraic meaning (unless our field has characteristic 2, in which case the determinant and permanent are equal). It is tempting to conjecture that, if $\mb F_q$ does not have characteristic 2, then the permanent of a uniformly random matrix over $\mb F_q$ is asymptotically uniform, as follows.
\begin{conjecture}
\label{conj:main}Fix a finite field $\mathbb{F}_{q}$ of odd characteristic.
Then, for a uniformly random $n\times n$ matrix $A\in\mathbb{F}_q^{n\times n}$,
and any $x\in\mathbb{F}_q$, we have
\[
\lim_{n\to\infty}\Pr[\on{per}(A)=x]=\frac{1}{q}.
\]
\end{conjecture}
This conjecture seems to have been floating around the community for a while, but we are not completely sure about its origins. Permanents of random matrices over finite fields have received quite some interest in the computer science community (see e.g.\ \cite{FL96,Lip91,GLRSW91,GS92,CPS99}), due to a phenomenon called \emph{random self-reducibility}, but surprisingly we were not able to find any study of the asymptotic distribution of $\on{per}(A)$ in this literature. The earliest reference we could find was in the open problem session of a 2017 workshop on combinatorics at the Mathematisches Forschungsinstitut Oberwolfach~\cite{Kop17}, in which Swastik Kopparty asked for the limiting distribution of the permanent over $\mb F_3$. The $\mb F_3$ case of \cref{conj:main} also appears explicitly in a recent paper by Scheinerman~\cite{Sch24} (backed by quite convincing computational evidence), and the general case of \cref{conj:main} appeared in a very recent paper of Ghasemi, Gross and Kopparty~\cite{GGK25}. See also a related conjecture by Esperet~\cite{Esp22} (motivated by questions in graph theory).

Note that, by symmetry (through rescaling any row), $\on{per}(A)$ is equally likely to take any nonzero value. Thus, in order to understand the distribution of $\on{per}(A)$ it suffices to understand $\Pr[\on{per}(A)=0]$. More specifically, the assertion of \cref{conj:main} is equivalent to $\lim_{n\to\infty}\Pr[\on{per}(A)=0]=1/q$.

Although we are not able to prove \cref{conj:main}, we are at least
able to prove that the permanent is (asymptotically) \emph{more} uniform than the determinant,
as follows (recall that $\lim_{n\to\infty}\Pr[\det(A)=0]=\alpha_q>1/q$ and observe that $\alpha_q-1/q$ has order of magnitude $1/q^2$).
\begin{theorem}
\label{thm:main}Fix a finite field $\mathbb{F}_{q}$ of odd characteristic. For a uniformly random $n\times n$ matrix $A\in\mathbb{F}_{q}^{n\times n}$,
we have
\begin{equation}
\Pr[\on{per}(A)=0]\ge\frac{1}{q}\label{eq:trivial-lower-bound}
\end{equation}
for all $n$, and we have the \textbf{strict} inequality
\begin{equation}
\limsup_{n\to\infty}\Pr[\on{per}(A)=0]<\alpha_{q}\label{eq:separation-all-p}.
\end{equation}
Moreover, for all $n\ge 3$ we have
\begin{equation}
\Pr[\on{per}(A)=0]\le \frac{1}{q}+\frac{C}{q^{3}}\label{eq:asymptotic-p}
\end{equation}
for some absolute constant $C$. %\mk{rephrased this slightly to try to avoid it sounding like the main result was the trivial 1/q lower bound, and to put the statements in the same order as theorem 1.4}
\end{theorem}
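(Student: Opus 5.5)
The plan is to reveal the matrix one row at a time and expand the permanent along the revealed rows, in analogy with the row-by-row heuristic for \cref{fact:det}.

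\textbf{The lower bound \cref{eq:trivial-lower-bound}.} Expanding along the first row gives $\on{per}(A)=\sum_j a_{1,j}c_j$, where $c_j$ is the permanent of the minor obtained by deleting row $1$ and column $j$. The $c_j$ depend only on the remaining $(n-1)\times n$ matrix $B$ formed by rows $2,\dots,n$. Let $E$ be the event that all $c_j$ vanish. Conditional on $B$ and on $\overline E$, the quantity $\on{per}(A)$ is a nonzero linear form in the uniform first row, so it is uniform on $\mb F_q$. Hence
\[
\Pr[\on{per}(A)=0]=\frac1q+\Bigl(1-\frac1q\Bigr)\Pr[E]\ge \frac1q,
\]
which proves \cref{eq:trivial-lower-bound}. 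The same identity holds for $\det$, with $E$ replaced by the event $\{\on{rank}B<n-1\}$. In the limit this event has probability $\beta_q$, where $\alpha_q=1/q+(1-1/q)\beta_q$ and $\beta_q=\Theta(q^{-2})$. So \cref{eq:asymptotic-p} amounts to $\Pr[E]=O(q^{-3})$, and \cref{eq:separation-all-p} amounts to $\limsup\Pr[E]<\beta_q$.

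\textbf{Second expansion and the bound \cref{eq:asymptotic-p}.} Expand each $c_j$ along the first row $b$ of $B$. This gives $c_j=\sum_{k\ne j}b_k d_{jk}$, where $d_{jk}=d_{kj}$ is the permanent of the $(n-2)\times(n-2)$ matrix obtained from the last $n-2$ rows by deleting columns $j$ and $k$. Thus $c=Mb$, where $M=(d_{jk})$ is a symmetric $n\times n$ matrix with zero diagonal that is independent of $b$. Conditioning on $M$ gives $\Pr[E\mid M]=q^{-\on{rank}M}$. The key observation is that in odd characteristic a symmetric matrix with zero diagonal cannot have rank $1$: if $M=\lambda vv^{T}$ with $\lambda\neq 0$, then the diagonal entries $\lambda v_j^2$ all vanish, forcing $v=0$. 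Therefore
\[
\Pr[E]\le q^{-3}+q^{-2}\Pr[\on{rank}M=2]+\Pr[M=0].
\]
It then suffices to show $\Pr[\on{rank}M=2]=O(1/q)$ and $\Pr[M=0]=O(q^{-3})$ for $n\ge3$; the hypothesis $n\ge 3$ enters here. For $\Pr[M=0]$, I would expand once more along the next row. Each $d_{jk}$ becomes a linear form $\sum_\ell e_{jk\ell}\,x_\ell$, where $e_{jk\ell}$ are the $(n-3)$-permanents of the remaining $(n-3)\times n$ matrix with three columns deleted. The map from the next row $x$ to the whole matrix $M$ is then linear in $x$, with coefficient array the symmetric tensor $e$. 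One needs this map to have rank $\ge3$ except on an event of probability $O(q^{-3})$. This should follow from the same kind of argument one level down, since there are many more constraints than unknowns.

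\textbf{Main obstacle.} I expect the hardest step to be bounding $\Pr[\on{rank}M=2]$ by $O(1/q)$. A rank-$2$ symmetric matrix with zero diagonal must have the form $\lambda(uv^T+vu^T)$. So the event forces every $3\times3$ minor of $M$ to vanish, and these minors are polynomial conditions on the entries $d_{jk}$. I would try to pick one such minor that, after revealing all but one row, is a nonzero polynomial of bounded degree in that row, and then apply Schwartz--Zippel to get probability $O(1/q)$. The subtle point is ruling out degenerate configurations in which this polynomial vanishes identically; these should again be controlled by a lower-order event such as $M=0$.

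\textbf{The strict inequality \cref{eq:separation-all-p}.} For $q$ larger than some absolute constant, \cref{eq:separation-all-p} follows from \cref{eq:asymptotic-p}, since $\alpha_q-1/q\gtrsim q^{-2}\gg C q^{-3}$. For the finitely many small odd $q$, I would use the exact decomposition above and compare $\limsup\Pr[E]$ with $\beta_q=\lim\Pr[\on{rank}B<n-1]$. The point is that in the determinant case the analogous matrix $M$ has rank exactly $2$ with probability bounded below, whereas for the permanent the rank-$\le1$ loss is impossible and rank $\ge3$ occurs with probability bounded away from $0$ uniformly in $n$. Making this quantitative will likely require explicit estimates of these probabilities for small $q$.
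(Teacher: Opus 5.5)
Your setup coincides with the paper's. The lower-bound argument is the same, your $M=(d_{jk})$ is the paper's matrix $H$, and reducing to $\on{rank}M\ge 3$ via $\Pr[E\mid M]=q^{-\on{rank}M}$ is exactly the paper's strategy. However, the step you flag as the main obstacle is the heart of the proof, and your plan does not contain the idea that resolves it.

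Your rank-$1$ exclusion holds in every characteristic, so odd characteristic must enter through the rank-$\ge 3$ step. (In characteristic $2$, where $\on{per}=\det$, $M$ is alternating and typically has rank exactly $2$.) Two observations are missing:
\begin{enumerate}
\item The principal $3\times 3$ minor on $\{j,k,\ell\}$ equals $2d_{jk}d_{j\ell}d_{k\ell}$ (\cref{lem:matrix}). It is nonzero as soon as those three entries are.
\item Each $d_{jk}$ is itself the permanent of a uniform $(n-2)\times(n-2)$ matrix. So the row-by-row bound (\cref{lem:match-det}) gives $\Pr[d_{jk}=0]\le 1-\prod_{i=1}^{n-2}(1-q^{-i})\le\min(\alpha_q,2/q)$.
\end{enumerate}
With these, no Schwartz--Zippel or degenerate-case analysis is needed. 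A union bound over one triple gives $\Pr[\on{rank}M\le 2]\le 6/q$, which together with $\Pr[M=0]\le 2q^{-3}$ yields \eqref{eq:asymptotic-p}.

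For $\Pr[M=0]$, the mechanism is not ``more constraints than unknowns'' but a diagonal structure. If some $e_{jk\ell}\ne 0$, then the block of the linear map $x\mapsto(d_{jk},d_{j\ell},d_{k\ell})$ in the variables $(x_\ell,x_k,x_j)$ is $e_{jk\ell}$ times the identity, so the map has rank $\ge 3$. Iterating this (\cref{lem:match-det-0}) gives $\Pr[M=0]\le 1-\prod_{i=3}^{n}(1-q^{-i})$.

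The genuine gap is \eqref{eq:separation-all-p} for the finitely many small $q$. There you need $\Pr[\on{rank}M\ge 3]\ge c>0$ uniformly in $n$, and you give no argument. ``Explicit estimates'' cannot be made uniform in $n$ by computation. Any bound of the form $1-O(1/q)$, such as the single-triple union bound, is vacuous there: for $q=3$ it gives $1-3\alpha_3<0$.

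The paper's idea is to consider the graph $G$ on $\{1,\dots,n\}$ with $jk$ an edge iff $d_{jk}\ne 0$. The expected number of non-edges is at most $\alpha_q\binom n2$, with $\alpha_q<0.45$. By Markov's inequality, $G$ has more than $n^2/4$ edges with probability at least $1-2\alpha_q-o(1)$. Mantel's theorem (\cref{thm:mantel}) then yields a triangle, hence a rank-$3$ principal submatrix.

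Plugging this and the sharp bound on $\Pr[M=0]$ into your decomposition gives $\Pr[E]\le 1-\prod_{i=2}^{n}(1-q^{-i})-(q^{-2}-q^{-3})\Pr[\on{rank}M\ge 3]$. Hence $\limsup_n\Pr[E]\le\beta_q-(1-2\alpha_q)(q^{-2}-q^{-3})<\beta_q$. A mere $O(q^{-3})$ bound on $\Pr[M=0]$ would not suffice here; you need the bound that exactly matches the determinant.
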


For $n=2$ one has $\Pr[\on{per}(A)=0]=1/q+\Theta(1/q^2)$, so the assumption $n\ge 3$ in \cref{eq:asymptotic-p} cannot be omitted.

We remark that a theorem in a similar spirit was previously proved by Budrevich and Guterman~\cite{BG12} (see also the simpler proof in \cite{Bass13}, and the earlier work \cite{DGKO11}): restated in probabilistic language, their main result was that for any finite field $\mathbb F_q$ of odd characteristic, and any $n\ge 3$, the zero-permanent probability is strictly less than the zero-determinant probability. However, the gap they established between these two probabilities tends to zero very rapidly as $n\to \infty$. Budrevich and Guterman were motivated by \emph{P\'olya's permanent problem} (see e.g.\ \cite{RST99,MM61,VY89}), which asks to what extent it is possible to re-express the permanent of a matrix in terms of a determinant. The gap established in \cite{BG12} shows that there is no bijective ``converter'' $f:\mb F_q^{n\times n}\to \mb F_q^{n\times n}$ such that $\on{per}(A)=\det(f(A))$ for all $A\in \mb F_q^{n\times n}$. Note that \cref{thm:main} implies the stronger result that there is no such converter that works for \emph{almost} all $A\in \mb F_q^{n\times n}$.

\subsection{General distributions}
It turns out that \cref{fact:det} is \emph{universal}, in the sense that the distribution of the determinant of a random matrix $A\in \mb F_q^{n\times n}$ does not depend very strongly on the distribution of the entries of $A$. For example, it is known that for \emph{any} non-constant distribution $\mu$ over a  finite field $\mb F_p$ of prime order, if $A\in \mb F_p^{n\times n}$ is a random matrix with independent $\mu$-distributed entries, then $\lim_{n\to \infty }\Pr[\det(A)=0]=\alpha_p$. This seems to have been first proved in 1990 by Charlap, Rees and Robbins~\cite{CRR90}, though similar theorems with more restrictive assumptions were proved much earlier by Kozlov~\cite{Koz} and Kovalenko and Levitskaja~\cite{KL75}, and a very general theorem over arbitrary finite fields (necessarily with a more technical statement) was later proved by Kahn and Koml\'os~\cite{KK01}. See also \cite{Map10,Map13,Woo17,Woo19,NW22,Ebe22,LMN21} for some more recent work on universality of various algebraic properties for random matrices over finite fields.

We can prove a theorem in the spirit of \cref{thm:main} for arbitrary (non-constant) distributions over finite fields of prime order, as follows.
\begin{theorem}\label{thm:general}
Fix a finite field $\mathbb{F}_{p}$ of prime order $p\ge 3$. Then, there is $\delta_p>0$ such that the following holds.
Let $\mu$ be a probability distribution over $\mb F_p$ supported on more than one value. If $A\in\mathbb{F}_{p}^{n\times n}$ is a random matrix with independent $\mu$-distributed entries, then for every $z\in \mb F_p$ we have
\begin{equation}
\limsup_{n\to\infty}\Pr[\on{per}(A)=z]\le\alpha_{p}-\delta_{p} \label{eq:separation-general}   
\end{equation}
and
\begin{equation}
\frac{1}{p}-\frac{C}{p^{3}}\le \liminf_{n\to\infty}\Pr[\on{per}(A)=z]\le \limsup_{n\to\infty}\Pr[\on{per}(A)=z]\le \frac{1}{p}+\frac{C}{p^{3}}.    \label{eq:asymptotic-general}
\end{equation}
for some absolute constant $C$.
\end{theorem}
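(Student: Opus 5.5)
Everything will go through the Fourier expansion of the event. For $z\in\mb F_p$ and $\omega=e^{2\pi i/p}$,
\[
\Pr[\on{per}(A)=z]=\frac1p\sum_{t\in\mb F_p}\omega^{-tz}\,\mb E\,\omega^{t\on{per}(A)},
\]
so both \cref{eq:separation-general} and \cref{eq:asymptotic-general} reduce to bounding the characters $\mb E\,\omega^{t\on{per}(A)}$ for $t\neq 0$. Specifically, I aim to show
\[
\limsup_{n\to\infty}\bigl|\mb E\,\omega^{t\on{per}(A)}\bigr|\le \frac{C}{p^{2}} \qquad\text{for all } t\ne 0.
\]
This gives \cref{eq:asymptotic-general} after summing over the $p-1$ nonzero $t$. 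For small $p$ the constant may be weak, so I also need a bound of the form $\sum_{t\ne0}|\cdot|<p\alpha_p-1-p\delta_p$ uniformly in $\mu$; that gives \cref{eq:separation-general}.

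\textbf{Step 1: reduce to few random entries.} The first step is to expand along the first $k$ rows (with $k$ a fixed constant) via Laplace expansion for permanents:
\[
\on{per}(A)=\sum_{S}\on{per}(A_{[k],S})\on{per}(A_{[k]^c,S^c}).
\]
Conditioning on the last $n-k$ rows, $\on{per}(A)$ becomes a multilinear polynomial of degree $k$ in the first $k$ rows. Its coefficients are the $\binom nk$ complementary minors $M_S=\on{per}(A_{[k]^c,S^c})$.

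\textbf{Step 2: the minors vanish together rarely.} The point is to show that with probability $1-o(1)$ (or at least with probability bounded away appropriately) the coefficient vector $(M_S)_S$ is ``rich''. By this I mean it contains many disjoint configurations where, restricted to a $k\times k$ block of columns, the polynomial is essentially a nondegenerate multilinear form such as $x_{11}x_{22}+x_{12}x_{21}$ (for $k=2$) with the other variables frozen. I would prove this by induction on $n$ using the row-by-row revelation. The key input is that if every $(n-k)$-minor vanishes, then every extension does too, and the probability of such a degenerate event decays, since each new row is non-constant with independent entries.

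\textbf{Step 3: bound the characters on the good event.} Given a rich coefficient vector, I bound $|\mb E\,\omega^{t\on{per}}|$ by Cauchy--Schwarz/decoupling over disjoint column blocks. Each block contributes the character sum of a quadratic-type form in independent $\mu$-variables. Because $p$ is odd and $\mu$ is non-constant, such a sum has modulus at most $1-c_p$, and a uniform $c_p$ depends only on $p$, since non-constant distributions on $\mb F_p$ form a set where this is continuous and strictly below $1$ after compactness. This needs care, because $\mu$ can approach a point mass. So I would instead use $\Theta(n)$ disjoint blocks, driving the product to $0$ uniformly, and reduce the whole contribution to the bad event of Step 2.

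\textbf{Main obstacle.} The main obstacle is therefore the bad event: the probability that all relevant minors are structurally degenerate (e.g.\ many zero minors). This probability is what produces the $C/p^3$ and separation-from-$\alpha_p$ terms. I expect it to be bounded by comparison with the determinant-style rank events, with probability $\lesssim 1/p$ for one zero minor. The nonuniformity then arises only at second order, since the zero events of different minors are nearly independent, giving $1/p\cdot O(1/p^2)$. Making this near-independence quantitative uniformly over all non-constant $\mu$ is the step I expect to be hardest. I would first try handling it by a two-row expansion and an explicit computation for the $2\times2$ and $3\times3$ cases (consistent with the $n\ge3$ phenomenon noted after \cref{thm:main}).
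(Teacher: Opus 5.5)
There is a genuine gap. The Fourier reformulation does not touch the actual difficulty, and your quantitative target is off by a factor of $p$. Since $\Pr[\on{per}(A)=z]-\frac1p=\frac1p\sum_{t\ne0}\omega^{-tz}\,\mb E\,\omega^{t\on{per}(A)}$, a bound $C/p^{2}$ on each character only gives $|\Pr[\on{per}(A)=z]-\frac1p|\le C/p^{2}$. For \cref{eq:asymptotic-general} you need $O(p^{-3})$ per character.

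More importantly, conditioning on $A^{\uparrow1}$ makes $\on{per}(A)$ a linear form in the last row whose coefficients are the $(n-1)$-minors. By \cref{lem:approx-unif}, $|\mb E\,\omega^{t\on{per}(A)}|\le\Pr[\mc E(1,Q)^{\mathrm c}]+p\varepsilon$ for $Q=Q_\mu(\varepsilon)$, and in the uniform case one has exactly $\mb E\,\omega^{t\on{per}(A)}=\Pr[\mc E(1)^{\mathrm c}]$. So your goal reduces to, and in the uniform case is equivalent to, showing that the bad event has probability $O(p^{-3})$. That is precisely the ``main obstacle'' you leave open. The version of Step 2 ``with probability $1-o(1)$'', combined with Step 3, would even prove the open \cref{conj:main}.

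Steps 1--3 also go through verbatim for $\det(A)$: there is a signed Laplace expansion, the block form becomes $x_{11}x_{22}-x_{12}x_{21}$, and odd characteristic plays no role in bounding such a character sum away from $1$. Yet for the determinant the characters tend to $(p\alpha_p-1)/(p-1)=\Theta(p^{-2})$. So everything hinges on a \emph{permanent-specific} estimate for the bad event. Your heuristic for it does not supply one. ``Comparison with determinant-style rank events'' is exactly what yields $p^{-2}$ (\cref{lem:match-det}), and beating that is the whole point. ``Near-independence'' of the vanishing of different minors is false: for $\det$, all maximal minors of $A^{\uparrow1}$ vanish with probability $\approx p^{-2}$, not $\approx p^{-n}$.

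The missing idea is the paper's treatment of the last two rows. The $(n-2)$-minors of $A^{\uparrow2}$ form a \emph{symmetric} matrix $H$ with \emph{zero diagonal}, and $\mc E(1)$ fails exactly when $H\vec x=\vec 0$, where $\vec x$ is the second-last row. A triangle in the support graph of $H$ gives a principal $3\times 3$ block with determinant $2h_{12}h_{13}h_{23}\ne0$ (\cref{lem:matrix}; this is where $p$ odd is used). Hence $\on{rank}H\ge3$, and the failure probability drops from $p^{-2}$ to $p^{-3}$. Mantel's theorem plus Markov's inequality (using $\alpha_p<1/2$) give such a triangle with probability at least $1-2\alpha_p-o(1)=1-O(1/p)$. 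This gain is also the only source of the separation \cref{eq:separation-general}.

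For general $\mu$, your Step 2 has a second problem. Revealing rows one at a time and using only non-constancy of $\mu$ gives a per-step failure probability $\rho^{s}$ with $\rho=\max_z\mu(z)$, not $p^{-s}$. So it cannot produce the $\mu$-independent constants $\alpha_p$, $\delta_p$, $C$. The paper gets around this by tracking \emph{many disjoint} column sets with nonzero complementary permanent:
\begin{itemize}
\item \cref{lem:tao-vu} supplies $\lceil n^{0.99}\rceil$ of them at a bounded depth $S$.
\item \cref{lem:one-step-general} climbs back up with step probabilities $1-p^{-s}-\varepsilon$. It relies on \cref{lem:robust-high-rank-anticoncentration}, which simulates uniform variables by sums of many $\mu$-variables.
\item In the last two rows one finds many vertex-disjoint triangles and applies \cref{coro:robust-high-rank-hamming-anticoncentration}.
\end{itemize}
Your proposal contains no substitute for either of these ingredients.
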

\begin{remark}
For simplicity we have stated \cref{thm:general} only for prime fields, but our proof approach surely can be adapted to arbitrary finite fields, under some kind of non-degeneracy assumption on the distributions of the entries. Unfortunately, our proof approach does not seem to be compatible with the weakest non-degeneracy conditions in the literature. For example, Kahn and Koml\'os~\cite{KK01} proved that as long as $\mu$ is a distribution over a finite field $\mb F_q$ that is not supported on an affine translate of a subfield of $\mb F_q$, then for a random matrix $A$ with independent $\mu$-distributed entries we have $\lim_{n\to\infty}\Pr[\det(A)=0]=\alpha_q$. In this very general setting we do not even know how to prove the inequality $\limsup_{n\to\infty}\Pr[\on{per}(A)=x]\le \alpha_q$.
\end{remark}

\begin{remark}
In combinatorial random matrix theory, perhaps the most intensively studied class of random matrices is the class of random \emph{sign matrices}. Letting $A\in \{-1,1\}^{n\times n}$ be a uniformly random $n\times n$ matrix with $\pm 1$ entries, in spectacular recent work, Tikhomirov~\cite{Tik20} proved that $\Pr[\det(A)=0]=(1/2+o(1))^n$ (see also the surveys \cite{Vu14,Vu20,Gui23,Sah25}). There has been quite some interest in permanents of random sign matrices (see e.g. \cite{TV09,LM19,KS22,HKS25}), but still much less is known about permanents than determinants. It seems quite plausible that $\on{per}(A)$ and $\det(A)$ have very similar behaviour, but it is worth noting that \cref{eq:asymptotic-general} gives one of the first ways in which the distributions of $\on{per}(A)$ and $\det(A)$ \emph{differ}: their statistics mod $p$ are quite different, for all primes $p\ge 3$.
\end{remark}

\subsection{Overview of the paper and proofs} It would be possible to give a unified proof of \cref{thm:main,thm:general}, but we found it to be clearer to present the proofs separately.

In \cref{sec:uniform} we give the short proof of \cref{thm:main}. We reveal the rows of our random matrix one-by-one and study the permanents of submatrices as we go, inductively showing that ``nonzero permanents are at least as likely as nonzero determinants''. Then, in the final few rows we proceed slightly differently, exploiting a certain symmetry property of the permanent.

In \cref{sec:general} we prove \cref{thm:general}. The overall strategy is the same, but we need to additionally exploit the fact that if we add a large number of independent $\mu$-distributed random variables, the result is nearly uniform. So, if we can simultaneously keep track of the permanents of many submatrices, we can ``simulate'' the uniform case in the $\mu$-distributed setting. This requires a few additional ideas.

\section{Proof for the uniform case}\label{sec:uniform}
In this section we prove \cref{thm:main}. First, it is easy to see that the permanent enjoys an analogue of the minor expansion formula for the determinant, as we record below. This fact will allow us to study permanents recursively.
\begin{fact}\label{fact:minor-expansion}
For any matrix $A\in \mb F_q^{k\times k}$, we have
\[
\on{per}(A)=\sum_{i=1}^k\on{per}(A'_i) x_{i},
\]
where $A'_i$ is the $(k-1)\times (k-1)$ submatrix of $A$ obtained by deleting the last row and the $i$th column, and $x_i$ is the entry of $A$ in the last row and the $i$th column.
\end{fact}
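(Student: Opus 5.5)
The statement to prove is \cref{fact:minor-expansion}, the last-row expansion of the permanent. I would prove it directly from the defining formula \cref{eq:det-per}, by grouping the permutations according to where they send the last row.

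First, partition $S_k$ into the classes $\{\pi\in S_k:\pi(k)=i\}$ for $i=1,\dots,k$. This gives
\[
\on{per}(A)=\sum_{i=1}^k x_i\sum_{\substack{\pi\in S_k\\ \pi(k)=i}}\prod_{j=1}^{k-1}a_{j,\pi(j)},
\]
where $x_i=a_{k,i}$ is pulled out of each product.

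Next, for fixed $i$, identify the inner sum with $\on{per}(A'_i)$. Let $\sigma_i:\{1,\dots,k-1\}\to\{1,\dots,k\}\setminus\{i\}$ be the order-preserving bijection, so that the $(j,l)$ entry of $A'_i$ is $a_{j,\sigma_i(l)}$. The map $\pi\mapsto\sigma_i^{-1}\circ\pi|_{\{1,\dots,k-1\}}$ is a bijection from $\{\pi\in S_k:\pi(k)=i\}$ to $S_{k-1}$. Indeed, the restriction of such a $\pi$ is exactly a bijection from $\{1,\dots,k-1\}$ to $\{1,\dots,k\}\setminus\{i\}$, and every such bijection extends uniquely by setting $\pi(k)=i$. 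Under this correspondence, $\tau=\sigma_i^{-1}\circ\pi|_{\{1,\dots,k-1\}}$ satisfies
\[
\prod_{j=1}^{k-1}a_{j,\pi(j)}=\prod_{j=1}^{k-1}(A'_i)_{j,\tau(j)},
\]
so the inner sum equals $\on{per}(A'_i)$. Summing over $i$ gives the claimed formula.

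There is no real obstacle. The only point needing care is the reindexing of the columns of $A'_i$ via $\sigma_i$. Unlike the determinant expansion, no sign bookkeeping is required, since the permanent carries no $\on{sign}(\pi)$ factor.
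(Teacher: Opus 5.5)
Your proof is correct: grouping the permutations by the value $\pi(k)$ and reindexing the columns of $A'_i$ via the order-preserving bijection $\sigma_i$ is the standard verification. The paper gives no written proof of this fact (it only remarks that the expansion is easy to see, by analogy with the determinant's minor expansion), and your argument is the routine check it leaves implicit.
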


Now, the following lemma shows that nonzero permanent-minors ``grow
at least as well'' as nonzero determinant-minors, each time we reveal an additional row (note that for a uniformly random matrix $A\in \mb F_q^{n\times n}$, when conditioning on linearly independent outcomes of the first $n-s$ rows, the probability that the next row is in their span is precisely $q^{-s}$).
\begin{definition}
In the setting of \cref{thm:main}, for any $0\le s\le n$, let $A^{\uparrow s}$ be the $(n-s)\times n$ submatrix
of $A$ obtained by deleting the last $s$ rows. Let $\mathcal{E}(s)$
be the event that $A^{\uparrow s}$ contains a $(n-s)\times(n-s)$
submatrix with nonzero permanent.
\end{definition}

\begin{lemma}
\label{lem:match-det-0}For any $s\in\{1,\dots,n\}$, and any outcome
of $A^{\uparrow s}$ satisfying $\mathcal{E}(s)$, we have
\[
\Pr[\mathcal{E}(s-1)\,|\,A^{\uparrow s}]\ge1-q^{-s}.
\]
\end{lemma}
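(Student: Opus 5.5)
Fix an outcome of $A^{\uparrow s}$ satisfying $\mathcal{E}(s)$, and choose a set $S$ of $n-s$ columns such that the $(n-s)\times(n-s)$ submatrix $A^{\uparrow s}[S]$ has nonzero permanent. The new row $x=(x_1,\dots,x_n)$, which is row $n-s+1$ of $A$, is uniform in $\mb F_q^n$. For each column $j\notin S$ (there are $s$ of them), consider the $(n-s+1)\times(n-s+1)$ submatrix $M_j$ of $A^{\uparrow (s-1)}$ with columns $S\cup\{j\}$. By \cref{fact:minor-expansion}, $\on{per}(M_j)$ is an affine-linear function of $x$. It equals $\on{per}(A^{\uparrow s}[S])\,x_j$ plus a linear combination of the entries $x_i$ with $i\in S$, where the coefficients are permanents of $(n-s)\times(n-s)$ submatrices of $A^{\uparrow s}$. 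Write $c=\on{per}(A^{\uparrow s}[S])\neq0$, and set $L_j(x)=c\,x_j+\ell_j(x_S)$, with $\ell_j$ a linear form in the coordinates indexed by $S$.

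The event $\mathcal{E}(s-1)$ holds as soon as some $L_j(x)\neq 0$. So it suffices to bound $\Pr[L_j(x)=0\text{ for all }j\notin S]$. Condition on $x_S$. The remaining coordinates $x_j$ with $j\notin S$ are independent and uniform, and each condition $L_j=0$ pins $x_j$ to the single value $-\ell_j(x_S)/c$. The probability that all $s$ conditions hold is therefore exactly $q^{-s}$. Averaging over $x_S$ gives $\Pr[\neg\mathcal{E}(s-1)\mid A^{\uparrow s}]\le q^{-s}$, which is the claim.

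The only point needing care is to check that the expansion really isolates $x_j$ with coefficient $c$. In the expansion of $\on{per}(M_j)$ along its last row, deleting column $j$ leaves exactly $A^{\uparrow s}[S]$. The other terms involve only the $x_i$ with $i\in S$, and their coefficients depend only on $A^{\uparrow s}$, so they are fixed once we condition. Beyond this the argument is a direct independence computation, and I expect no real obstacle.
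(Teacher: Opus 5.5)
Your proposal is correct and is essentially the paper's argument. The paper likewise fixes a column set with nonzero permanent (without loss of generality the last $n-s$ columns) and conditions on the new row's entries in those columns. It then observes that the $s$ permanents of the enlarged $(n-s+1)\times(n-s+1)$ submatrices are independent bijective affine functions of the remaining $s$ entries, so they all vanish with probability exactly $q^{-s}$.
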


\begin{proof}
Throughout this proof, we implicitly condition on the outcome of $A^{\uparrow s}$ (i.e., we treat it as a non-random object). That is to say, we only work with the randomness of the last (i.e., $(n-s+1)$-th) row of $A^{\uparrow (s-1)}$. Denote the entries of this row by $x_1,\dots,x_n$.

Without loss of generality, we may assume that the $(n-s)\times(n-s)$ submatrix
of $A^{\uparrow s}$ formed by the last $n-s$ columns has nonzero permanent. Under this assumption, we will actually only need to use the randomness of $x_1,\dots,x_s$: fix arbitrary outcomes of $x_{s+1},\dots,x_n$, and for the rest of the proof we also implicitly condition on these outcomes.

For $i\in\{1,\dots,s\}$, write $t_{i}$ for the permanent of the $(n-s+1)\times (n-s+1)$
matrix obtained from $A^{\uparrow (s-1)}$ by deleting all of the first
$s$ columns except the $i$th. Then (recalling that we are in a conditional probability space where the only relevant source of randomness is the entries $x_1,\dots,x_s$, and recalling \cref{fact:minor-expansion}), note that $t_{1},\dots,t_{s}$ are all independent, and $t_{i}$ is a bijective 
affine-linear transformation of $x_{i}$. % (with nonzero linear coefficient).
This means that $t_1,\dots,t_s$ are independent random variables that are uniform on $\mb F_q$. The desired result follows, noting that $\mc E(s-1)$ can only fail if $t_1=\dots=t_s=0$.
\end{proof}
Iterating \cref{lem:match-det-0} shows that the permanent is \emph{at
least} as uniform as the determinant, as follows.
\begin{corollary}
\label{lem:match-det}For any $s\in\{1,\dots,n\}$ we have 
\[
\Pr[\mathcal{E}(s-1)]\ge\prod_{i=s}^{n}(1-q^{-i}),
\]
and in particular
\[
\Pr[\on{per}(A)=0]=1-\Pr[\mathcal{E}(0)]\le1-\prod_{i=1}^{n}(1-q^{-i})\le \alpha_{q}.
\]
\end{corollary}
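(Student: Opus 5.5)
We prove the first inequality by downward induction on $s$, from $s=n$ down to the given value. The key is the tower property of conditional expectation, combined with \cref{lem:match-det-0}.

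For the base case $s=n$, the matrix $A^{\uparrow n}$ is the empty $0\times n$ matrix. Its empty $0\times 0$ submatrix has permanent $1\neq 0$, so $\mathcal{E}(n)$ holds deterministically. Applying \cref{lem:match-det-0} with $s=n$ then gives $\Pr[\mathcal{E}(n-1)]\ge 1-q^{-n}$.

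For the inductive step, suppose $\Pr[\mathcal{E}(s)]\ge\prod_{i=s+1}^{n}(1-q^{-i})$ for some $1\le s<n$. Since $\mathcal{E}(s)$ is determined by $A^{\uparrow s}$, we can write
\[
\Pr[\mathcal{E}(s-1)]\ge \Pr[\mathcal{E}(s-1)\cap\mathcal{E}(s)]=\mathbb{E}\bigl[\mathbbm{1}_{\mathcal{E}(s)}\Pr[\mathcal{E}(s-1)\,|\,A^{\uparrow s}]\bigr].
\]
By \cref{lem:match-det-0}, on $\mathcal{E}(s)$ the conditional probability is at least $1-q^{-s}$. Hence
\[
\Pr[\mathcal{E}(s-1)]\ge (1-q^{-s})\Pr[\mathcal{E}(s)]\ge\prod_{i=s}^{n}(1-q^{-i}),
\]
which completes the induction.

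For the second display, take $s=1$. The event $\mathcal{E}(0)$ says that the full matrix $A^{\uparrow 0}=A$ has nonzero permanent, so $\Pr[\on{per}(A)=0]=1-\Pr[\mathcal{E}(0)]\le 1-\prod_{i=1}^n(1-q^{-i})$. Finally, every factor $1-q^{-i}$ lies in $(0,1)$, so the partial products decrease to the infinite product. Therefore $\prod_{i=1}^n(1-q^{-i})\ge\prod_{i=1}^\infty(1-q^{-i})$, which gives the bound $\alpha_q$.

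The only delicate point is the base case, specifically the convention that the $0\times 0$ permanent equals $1$. Alternatively, one can start the induction at $s=n$ directly: $\mathcal{E}(n-1)$ is the event that some entry of the first row is nonzero, and its probability is exactly $1-q^{-n}$.
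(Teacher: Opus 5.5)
Your proof is correct and is essentially the paper's argument. The paper writes the same iteration as a telescoping product $\Pr[\mathcal{E}(s-1)]\ge\prod_{i=s}^{n}\Pr[\mathcal{E}(i-1)\,|\,\mathcal{E}(i)]$, using \cref{lem:match-det-0} and $\Pr[\mathcal{E}(n)]=1$, and your downward induction simply carries out that product one step at a time.
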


\begin{proof}
We have
\[
\Pr[\mathcal{E}(s-1)]\ge \prod_{i=s}^{n}\Pr[\mathcal{E}(i-1)\,|\,\mathcal{E}(i)]\ge\prod_{i=s}^{n}(1-q^{-i}),
\]
using \cref{lem:match-det-0} and the fact that $\Pr[\mathcal{E}(n)]=1$
(the empty matrix has permanent 1).
\end{proof}

    % Fix a finite field $\mb F_q$, and let $\vec x=(x_1,\dots,x_k)\in \mb F_q^{k}$ be a vector of independent random variables that are uniform on $\mb F_q$. Consider any (non-random) matrix $H\in \mb F_q^{k\times m}$ and vector $\vec h\in \mb F_q^k$. Then $\vec h+H\vec x$ is uniform on some affine subspace of $\mb F_q^n$ with dimension $\on{rank}(H)$, and in particular, for every $\vec z\in \mb F_q$, we have
    % \[\Pr[\vec h+H\vec x=\vec z]\le \frac1{q^{\on{rank(H)}}}.\]

We will also need the following elementary
lemma about $3\times3$ matrices.
\begin{lemma}
\label{lem:matrix}Fix a finite field $\mathbb{F}_{q}$ of odd characteristic,
and let $B\in\mathbb{F}_{q}^{3\times3}$ be a symmetric $3\times3$
matrix whose diagonal entries are all zero and whose off-diagonal
entries are all nonzero. Then $\on{rank}(B)=3$.
\end{lemma}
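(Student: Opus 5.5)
Write the matrix as
\[
B=\begin{pmatrix}0&a&b\\ a&0&c\\ b&c&0\end{pmatrix}
\]
with $a,b,c\in\mathbb F_q\setminus\{0\}$. The plan is to show directly that $\det(B)\neq 0$, since a $3\times 3$ matrix has rank $3$ exactly when its determinant is nonzero.

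Expanding the determinant (for instance via the cofactor expansion along the first row, or the Leibniz formula in \cref{eq:det-per}) gives
\[
\det(B)=0\cdot(0\cdot 0-c^2)-a(a\cdot 0-cb)+b(ac-0\cdot b)=abc+abc=2abc.
\]
The only permutations contributing nonzero terms are the two $3$-cycles, because every other permutation of $\{1,2,3\}$ fixes a point and therefore picks up a zero diagonal entry. Both $3$-cycles are even, so their terms add, each equal to $abc$ by symmetry of $B$.

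Since $\mathbb F_q$ has odd characteristic, $2\neq 0$ in $\mathbb F_q$. Also $a,b,c\neq 0$ and a field has no zero divisors, so $2abc\neq 0$ and hence $\on{rank}(B)=3$. The only place the hypotheses genuinely enter is the factor $2$, where odd characteristic is used; in characteristic $2$ the statement fails. Otherwise there is no real obstacle beyond checking the sign bookkeeping, namely that the two $3$-cycles have the same sign and so do not cancel.
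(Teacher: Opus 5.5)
Your proposal is correct and is the same argument as the paper's: the paper's proof likewise just observes that $\det(B)=2b_{1,2}b_{1,3}b_{2,3}\neq 0$ in odd characteristic. Your cofactor expansion and your count of the two even $3$-cycles are both correct.
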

\begin{proof}
Write $B=(b_{i,j})$, and observe that $\det(B)=2b_{1,2}b_{1,3}b_{2,3}\ne0$.
\end{proof}
The last ingredient we will need is Mantel's theorem from extremal graph theory (see e.g.\ \cite[Theorem~4.6]{Juk01}).
\begin{theorem}\label{thm:mantel}
    If a graph on $n$ vertices has more than $n^2/4$ edges, then it contains a triangle.
\end{theorem}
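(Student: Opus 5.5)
Mantel's theorem (\cref{thm:mantel}) is a classical result, quoted from \cite{Juk01}, so the paper does not need to prove it. If I wanted a self-contained argument, I would use the standard double-counting proof. Suppose $G$ is a triangle-free graph on $n$ vertices with edge set $E$. For each edge $uv\in E$, the neighbourhoods $N(u)$ and $N(v)$ are disjoint, since a common neighbour would close a triangle. Both neighbourhoods lie in the vertex set, so $\deg(u)+\deg(v)\le n$.

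Next I would sum this inequality over all edges. Each vertex $w$ appears in exactly $\deg(w)$ edges, so the left-hand side becomes $\sum_{w}\deg(w)^2$, giving
\[
\sum_{w}\deg(w)^2=\sum_{uv\in E}\bigl(\deg(u)+\deg(v)\bigr)\le n|E|.
\]
By Cauchy--Schwarz and the handshake lemma,
\[
\sum_w \deg(w)^2\ge \frac{1}{n}\Bigl(\sum_w\deg(w)\Bigr)^2=\frac{4|E|^2}{n}.
\]
Combining the two bounds gives $4|E|^2/n\le n|E|$, so $|E|\le n^2/4$ (the case $E=\emptyset$ is trivial). This is the contrapositive of the statement.

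There is no real obstacle here. The only point needing care is the disjointness of neighbourhoods along an edge, which is exactly where triangle-freeness is used. An alternative route is induction on $n$: delete both endpoints of an edge, and observe that at most $n-2$ edges join the pair $\{u,v\}$ to the remaining vertices. In the paper's context, I expect Mantel to be applied to a graph on column indices whose edges record certain nonzero permanent-minors. A triangle then yields a symmetric $3\times3$ matrix with zero diagonal and nonzero off-diagonal entries, to which \cref{lem:matrix} applies.
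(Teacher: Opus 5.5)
Your proof is correct: disjointness of $N(u)$ and $N(v)$ along each edge, summed over edges to get $\sum_w \deg(w)^2\le n|E|$, together with Cauchy--Schwarz, gives $|E|\le n^2/4$ for triangle-free graphs, and the induction you sketch also works since $(n-2)^2/4+(n-2)+1=n^2/4$. The paper gives no proof of its own (it just quotes Mantel's theorem from \cite{Juk01}), so your self-contained argument is all that is needed. Your reading of how it is used also matches the paper: a triangle in the graph where $ij$ is an edge iff $h_{i,j}\ne 0$ gives a $3\times 3$ principal submatrix of $H$ to which \cref{lem:matrix} applies.
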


Now we are ready to prove \cref{thm:main}.
\begin{proof}[Proof of \cref{thm:main}]
First, the desired \emph{lower} bound in \cref{eq:trivial-lower-bound} is near-trivial. Indeed, by \cref{fact:minor-expansion}, the permanent of $A$ is a linear combination of entries of $A$ in the last row, where the coefficients are the permanents of the $(n-1)\times (n-1)$ submatrices of $A^{\uparrow 1}$. Given an outcome of $A^{\uparrow 1}$ satisfying $\mc E(1)$, at least one of the coefficients of this linear combination is nonzero. The entries of the last row of $A$ are independent random variables uniform in $\mb F_q$, so this linear combination is itself uniform in $\mb F_q$. Thus, we obtain (noting that we always have $\on{per}(A)=0$ if the event $\mc E(1)$ does not hold) % writing $\mc E(1)^{\mr c}$ for the complement of the event $\mc E(1)$,
\[
\Pr[\on{per}(A)=0]=(1/q)\Pr[\mathcal{E}(1)]+\Pr[\mathcal{E}(1)^{\mathrm c}]\ge1/q.
\]

For the rest of the proof we focus on upper-bounding $\limsup_{n\to\infty}\Pr[\on{per}(A)=0]$ to show first \cref{eq:separation-all-p} and then \cref{eq:asymptotic-p}. For this, we will use \cref{lem:match-det-0}, but we will treat the last two rows slightly differently. To this end, we define a symmetric matrix $H=(h_{i,j})\in \mb F_q^{n\times n}$ that encodes the permanents of $(n-2)\times (n-2)$ submatrices in $A^{\uparrow2}$: for $i\ne j$, let $h_{i,j}$ be the permanent
of the submatrix of $A^{\uparrow2}$ obtained by deleting columns $i$ and $j$, and for each $i$ let $h_{i,i}=0$. Let
$\vec{x}\in\mathbb{F}_{q}^{n}$ be the last row of $A^{\uparrow 1}$ (i.e., the second-last row of $A$). Then, via \cref{fact:minor-expansion}, it is not hard to see that the entries of $H\vec{x}$ are the permanents of the $(n-1)\times(n-1)$ submatrices in $A^{\uparrow 1}$. Thus, $\mathcal{E}(1)$ holds if and only if $H\vec{x}=\vec{0}$.

If we fix an outcome of $A^{\uparrow 2}$ such that $\on{rank}(H)\ge 3$, then, conditionally on this outcome of $A^{\uparrow 2}$, the random vector $H\vec x$ is uniform on some linear subspace of $\mb F_q^n$ with dimension at least 3. So,
\begin{equation}
\Pr[\mathcal{E}(1)\,|\,\on{rank}(H)\ge 3]=\Pr[H\vec{x}=\vec{0}\,|\,\on{rank}(H)\ge 3]\ge1-q^{-3}.\label{eq:better-with-rank}  
\end{equation}
Note that \cref{eq:better-with-rank} gives a stronger bound than the $s=2$ case of \cref{lem:match-det-0}. The idea for the rest of the proof is to show that $\on{rank}(H)\ge 3$ with non-negligible probability, so we can use \cref{eq:better-with-rank} to improve on the proof of \cref{lem:match-det}.%So, we need to study the probability that $\on{rank}(H)\ge 3$; this will involve \cref{lem:matrix}.

Consider the graph $G$ on the vertex set $\{1,\dots,n\}$ where
$ij$ is an edge if $h_{i,j}\ne0$. The probability that a given
pair $ij$ does not form an edge of $G$ is at most $\alpha_{q}$,
by \cref{lem:match-det} (applied to an $(n-2)\times(n-2)$ random matrix). A direct calculation shows that $\alpha_q<0.45$ (using $q\ge 3$). Thus, by Markov's inequality, the probability
that our graph $G$ has at least $\binom{n}{2}-n^2/4$
non-edges is at most
\[\frac{\alpha_{q}\cdot \binom{n}{2}}{\binom{n}{2}-n^2/4}=2\alpha_{q}+o(1)\le 0.9\]
(assuming, as we may, that $n$ is sufficiently large). If this does not occur,
then by Mantel's theorem (\cref{thm:mantel}) $G$ contains a triangle. This triangle corresponds
to a $3\times3$ principal submatrix of $H$ whose diagonal entries
are all zero and whose off-diagonal entries are all nonzero, so $\on{rank}(H)\ge3$
by \cref{lem:matrix}. In summary, letting $\mathcal{F}$ be the event
that $\on{rank}(H)\ge3$, we have proved that
\[
\Pr[\mathcal{F}]\ge1-2\alpha_q -o(1)\ge 0.1.
\]

Now, we have
\begin{align*}
\Pr[\mathcal{E}(1)] & \ge\Pr[\mathcal{E}(1)\,|\,\mathcal{F}]\Pr[\mathcal{F}]+\Pr[\mathcal{E}(1)\,|\,\mathcal{E}(2)\setminus\mathcal{F}]\Pr[\mathcal{E}(2)\setminus\mathcal{F}]\\
 & \ge(1-q^{-3})\Pr[\mathcal{F}]+(1-q^{-2})(\Pr[\mathcal{E}(2)]-\Pr[\mathcal{F}])\\
 & \ge(1-q^{-2})\Pr[\mathcal{E}(2)]+\Pr[\mathcal{F}](q^{-2}-q^{-3})\\
 & \ge\prod_{i=2}^{n}(1-q^{-i})+\frac{q^{-2}}{20}.
\end{align*}
In the second line we used \cref{eq:better-with-rank} to lower-bound $\Pr[\mc E(1)\,|\,\mc F]$ and we used \cref{lem:match-det-0} to lower-bound
$\Pr[\mathcal{E}(1)\,|\,\mathcal{E}(2)\setminus\mathcal{F}]$. In the last line we used \cref{lem:match-det} to lower-bound $\Pr[\mathcal{E}(2)]$.

Using \cref{lem:match-det-0}, it follows that
\[
\Pr[\on{per}(A)\ne0]=\Pr[\mathcal{E}(0)]\ge\Pr[\mathcal{E}(0)\,|\,\mathcal{E}(1)]\Pr[\mathcal{E}(1)]\ge (1-q^{-1})\Pr[\mc E(1)]\ge\prod_{i=1}^{n}(1-q^{-i})+\frac{q^{-2}}{50}\ge 1-\alpha_q+\frac{q^{-2}}{50}
\]
and hence $\limsup_{n\to \infty}\Pr[\on{per}(A)= 0]\le \alpha_q-q^{-2}/50<\alpha_q$, proving \cref{eq:separation-all-p}.

For \cref{eq:asymptotic-p} we use a similar approach. First, note that the probability that a given pair $ij$ does not form an edge of $G$ can also be bounded via \cref{lem:match-det} by $1-\prod_{i=1}^{n-2}(1-q^{-i})\le \sum_{i=1}^{n-2}q^{-i}\le 2q^{-1}$. Thus, again applying Markov's inequality as before, the probability that $G$ has at least $\binom{n}{2}-n^2/4$
non-edges is at most
\[\frac{2q^{-1}\cdot \binom{n}{2}}{\binom{n}{2}-n^2/4}=2q^{-1}\cdot \frac{2(n-1)}{n-2}\le 8q^{-1},\]
recalling the assumption $n\ge 3$. Thus, we obtain $\Pr[\mathcal{F}]\ge 1-8q^{-1}$.

From \cref{lem:match-det} we furthermore obtain $\Pr[\mathcal{E}(2)]\ge \prod_{i=3}^{n}(1-q^{-i})\ge 1-\sum_{i=3}^{n}q^{-i}\ge 1-2q^{-3}$. Then, the same argument as above shows that
\[\Pr[\mathcal{E}(1)]\ge (1-q^{-2})\Pr[\mathcal{E}(2)]+\Pr[\mathcal{F}](q^{-2}-q^{-3})\ge (1-q^{-2})(1-2q^{-3})+(1-8q^{-1})(q^{-2}-q^{-3})\ge 1-11q^{-3}\]
and consequently
\[\Pr[\on{per}(A)\ne0]\ge (1-q^{-1})\Pr[\mc E(1)]\ge (1-q^{-1})(1-11q^{-3})\ge 1-q^{-1}-11q^{-3}.\qedhere\]
\end{proof}

\begin{remark}
    Here we obtained a separation between the permanent and determinant by considering the last two rows in a special way. One can improve our bounds further by considering the last $s$ rows in a special way, for $s>2$. In particular, with a finite amount of casework for a given value of $s$, it seems that one can improve the error term $O(q^{-3})$ to $O(q^{-s-1})$. (We worked out the details of this for $s=3$, and intend to include them in a future companion note to this paper, but the casework seems to become exponentially more complicated for larger and larger $s$.)% For any individual $s$, there seems to be a finite amount of casework involved, but things seem to become exponentially more complicated for larger and larger $s$. (We worked out the details for $s=3$, and intend to include this in a companion note.)
\end{remark}

\section{Proof for general distributions}\label{sec:general}
Now, we turn to the proof of \cref{thm:general}. Throughout this section, we fix a prime $p\ge 3$ and a probability distribution $\mu$ over $\mb F_p$ supported on more than one value, and we let $A\in\mathbb{F}_{p}^{n\times n}$ be a random matrix with independent $\mu$-distributed entries.

To prove \cref{thm:general}, we need to adapt the proof of \cref{thm:main} to consider \emph{many} submatrices with nonzero permanent, simultaneously. We will consider submatrices satisfying a certain ``complement-disjointness'' property, as follows. % We start with a lemma which tells us that for large $s$, there are very likely to be many $(n-s)\times (n-s)$ submatrices with nonzero permanent, in the first $s$ rows, satisfying a certain disjointness property. To state this lemma, we first introduce some notation.

\begin{definition}\label{def:per-submatrix}
For any $0\le s\le n$, let $A^{\uparrow s}$ be the submatrix of $A$ obtained by deleting the last $s$ rows. For a set $I\subseteq \{1,\dots,n\}$ of size $s$, let $\on{per}(A;I)$ be the permanent of the $(n-s)\times (n-s)$ submatrix of $A^{\uparrow s}$ obtained by deleting the columns indexed by $I$. For a positive integer $\ell$, let $\mc E(s,\ell)$ be the event that there are $\ell$ disjoint sets $I_1,\dots,I_\ell$, each of size $s$, such that $\on{per}(A;I_i)\ne 0$ for each $i=1,\dots,\ell$.
\end{definition}

In the proof of \cref{thm:main} (specifically, in \cref{lem:match-det-0}), we conditioned on an outcome of $A^{\uparrow s}$ which has a submatrix with nonzero permanent, and studied the probability that $A^{\uparrow {(s-1)}}$ has a larger submatrix with nonzero permanent. \cref{lem:match-det-0} was stated only for random matrices with uniform entries; it would be possible to consider general entry distributions, but this would change the probability bound. The following lemma considers a similar situation where we condition on $A^{\uparrow s}$ having \emph{many} submatrices with nonzero permanent, and we would like $A^{\uparrow  (s-1)}$ to have \emph{many} larger submatrices with nonzero permanent. Unfortunately, the two meanings of ``many'' here are rather different from each other, but in this more ``robust'' situation we are able to obtain probability bounds which are essentially independent of the entry distribution (i.e., we obtain approximately the same $1-p^{-s}$ bound as in \cref{lem:match-det-0}, regardless of the entry distribution).

\begin{lemma}\label{lem:one-step-general}
    For any $s,\ell\in \mb N$ and $\varepsilon>0$, there is $L=L_\mu(s,\ell,\varepsilon)\in \mb N$ such that the following holds for sufficiently large $n$. For any outcome of $A^{\uparrow s}$ satisfying $\mc E(s,L)$, we have
    \[\Pr[\mc E(s-1,\ell)\,|\,A^{\uparrow s}]\ge 1-p^{-s}-\varepsilon.\]
\end{lemma}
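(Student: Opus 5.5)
Condition on $A^{\uparrow s}$ satisfying $\mc E(s,L)$, and fix disjoint sets $I_1,\dots,I_L$ of size $s$ with $\on{per}(A;I_j)\ne 0$. Let $x_1,\dots,x_n$ be the entries of row $n-s+1$. By \cref{fact:minor-expansion}, for each $j$ and each $i\in I_j$ we have
\[
\on{per}(A;I_j\setminus\{i\}) \;=\; \on{per}(A;I_j)\,x_i \;+\; \sum_{k\notin I_j}c_{j,i,k}\,x_k ,
\]
where the $c_{j,i,k}$ are determined by $A^{\uparrow s}$. So the $s$ new minors attached to block $j$ are affine functions of $x$. In each of them, the coefficient of the variables $x_i$, $i\in I_j$, is a nonzero scalar times the identity.

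The plan is to use the $L$ blocks to simulate the uniform case. Split the blocks into $\ell$ groups of $M=L/\ell$ blocks each, and work group by group. In a group we want some block $j$ for which the vector $t_j=(\on{per}(A;I_j\setminus\{i\}))_{i\in I_j}$ is not identically zero. If that happens, some $I_j\setminus\{i\}$ has nonzero permanent, and choosing one such set from each group gives $\ell$ disjoint sets of size $s-1$. So it suffices to show that for each group,
\[
\Pr[\text{all } t_j=0 \text{ in the group}]\le p^{-s}+\varepsilon/\ell ,
\]
and then take a union bound.

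The difficulty is that the $t_j$ depend on all coordinates of $x$, not only on those in $I_j$, so they are not independent. To deal with this, expose the entries one block at a time: within a group, order its $M$ blocks, and reveal $x$ on $I_{j'}$ for all blocks $j'\ne j$ before looking at block $j$. The argument has two ingredients.

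\textbf{Sum of many variables is near-uniform.} Each coordinate of $t_j$ contains the term $\on{per}(A;I_j)\,x_i$ with a nonzero coefficient, and $\mu$ is non-degenerate. Fourier analysis over $\mb F_p^s$ would make a vector of such sums $\varepsilon$-close to uniform, provided it is a sum of many independent non-degenerate terms. A single block only supplies one variable per coordinate, which gives no such smoothing.

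\textbf{Vanishing is a linear condition on the whole group.} The event that all $t_j$ vanish in the group is the event that a linear map applied to the group's variables (in dimension $Ms$) lands on a fixed point. The map restricted to block $j$'s own coordinates is invertible, so the map has rank at least $s$ in a strong, robust sense.

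I would combine these via the following Littlewood--Offord/Halász-type lemma over $\mb F_p$. Let $y\in\mb F_p^N$ have i.i.d.\ $\mu$-entries, and let $T:\mb F_p^N\to\mb F_p^m$ be linear, with $N$ large. For any target $z$, the probability that $Ty=z$ exceeds $p^{-r}$ by at most $\varepsilon$, where $r$ is the "robust rank" of $T$: the rank that survives after deleting a bounded number of coordinates. Here we need $r\ge s$, and we want to verify it by exhibiting, for every $\xi\ne0$, many coordinates $k$ with $(\xi^{T}T)_k\neq 0$. Given that, the Fourier expansion
\[
\Pr[Ty=z]=p^{-m}\sum_\xi \mathbb E\, e(\xi\cdot(Ty-z))
\]
works as follows. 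The frequencies $\xi$ whose character $y\mapsto e(\xi^{T}Ty)$ is nonconstant on many coordinates contribute at most $\max_{a\ne 0}|\hat\mu(a)|^{\#}\to 0$. The frequencies annihilating $T$ contribute at most $p^{-\mathrm{rank}}$.

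The main obstacle is showing robustness. Some frequency $\xi$ could concentrate on a single block, so that the linear form $\xi^{T}T$ touches few variables. My first attempt is to handle this with a pigeonhole/Ramsey-type cleaning step. Using $L\gg\ell$, pass to a sub-collection of blocks on which the cross-coefficients $c_{j,i,k}$ with $k\in I_{j'}$ follow a consistent pattern. That is, for each pair of blocks, the $s\times s$ cross-matrix either vanishes or belongs to a fixed finite type, via Ramsey's theorem on colours in $\mb F_p^{s\times s}$. Then do case analysis. If the cross-interactions vanish, the blocks are independent, and each block fails with probability at most $\max_z\Pr[t_j=z]$. This alone is not below $p^{-s}$, so instead I would let the target for block $j$ absorb the randomness of the other blocks' coordinates outside the group, for which there are many. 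If the cross-interactions are nonzero and homogeneous, then each $t_j$ receives contributions from many other blocks, and the Fourier bound gives $p^{-s}+\varepsilon$ directly. Making this dichotomy rigorous, with $L$ depending only on $\mu,s,\ell,\varepsilon$ and not on $n$, is where I expect the real work to be.
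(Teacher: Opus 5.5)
\textbf{Gap 1: the union bound over groups.} The argument breaks where you reduce to groups. A union bound over the $\ell$ groups gives $\Pr[\text{some group fails}]\le \ell\,(p^{-s}+\varepsilon/\ell)=\ell p^{-s}+\varepsilon$, not $p^{-s}+\varepsilon$. No sharper per-group estimate can repair this. A single group can fail with probability close to $p^{-s}$, and nothing in the linear structure you use stops different groups from being independent. For example, suppose every coordinate of every $t_j$ in group $g$ is a multiple of one of $s$ fixed linear forms in variables private to group $g$. This is compatible with the diagonal coefficient structure, and then some group fails with probability about $1-(1-p^{-s})^{\ell}$.

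The missing idea is that $\mc E(s-1,\ell)$ does not need a success in every group: $\ell$ successful blocks anywhere suffice. So let $X$ be the number of $j\le L$ with $t_j=\vec 0$. If $\mc E(s-1,\ell)$ fails, then $X>L-\ell$. For any $K$-set $J$ of blocks, stack the coefficient matrices. Restricted to the columns $I_j$, the stacked matrix contains the invertible block $\on{per}(A;I_j)\cdot\mathrm{Id}$, so it has rank $s$ there for each $j\in J$. Hence \cref{lem:robust-high-rank-anticoncentration} with $K=K_\mu(s,\varepsilon/2)$ gives $\Pr[t_j=\vec 0\ \forall j\in J]\le p^{-s}+\varepsilon/2$. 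Therefore $\mathbb E\binom XK\le (p^{-s}+\varepsilon/2)\binom LK$. Markov's inequality then gives $\Pr[X>L-\ell]\le (p^{-s}+\varepsilon/2)\binom LK\binom{L-\ell}{K}^{-1}\le p^{-s}+\varepsilon$ for $L$ large. This $K$-th moment count is exactly the paper's proof.

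\textbf{Gap 2: the anticoncentration step is not established.} As you note, the criterion you planned to verify (every $\xi\neq 0$ has $\xi^{T}T$ nonzero on many coordinates) fails. A frequency supported on one block's rows can have $\xi^{T}T$ supported inside $I_j$. The Ramsey cleaning meant to fix this is left open. Its ``vanishing cross-interaction'' case is in fact the easy one: conditionally on the outside variables the blocks are independent, so a group of $M$ blocks fails with probability at most $\rho^{sM}$, where $\rho=\max_z\mu(z)$.

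Your Fourier argument only needs the set $\Lambda$ of frequencies with $|\mathrm{supp}(\xi^{T}T)|<Q$ to have size not much more than $p^{m-s}$. This follows by double counting. Each $\xi\in\Lambda$ annihilates all columns of at least $K-Q+1$ of the $K$ disjoint blocks. For a fixed block, at most $p^{m-s}$ frequencies do so, since they lie in the left kernel of a rank-$\ge s$ submatrix. Hence $|\Lambda|\le \frac{K}{K-Q+1}p^{m-s}$, and
\[
\Pr[Ty=z]\le \frac{K}{K-Q+1}\,p^{-s}+\lambda^{Q},\qquad \lambda=\max_{a\neq 0}|\hat\mu(a)|<1 .
\]
This would be a valid Fourier-analytic alternative to the paper's proof of \cref{lem:robust-high-rank-anticoncentration}. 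The paper instead splits on $\on{rank}M$, passes to at most $R$ spanning rows, and uses popular columns together with \cref{lem:approx-unif}. Either way, this only bounds the simultaneous failure of a fixed set of blocks; you still need the global counting from Gap 1 instead of the union bound.
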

We defer the proof of \cref{lem:one-step-general} to \cref{sec:approx-uniformity}. It is proved using an anticoncentration inequality for ``robustly high-rank'' linear maps (which is stated as \cref{lem:robust-high-rank-anticoncentration} later in this section, and may be of more general interest).

We would like to use \cref{lem:one-step-general} recursively, to deduce a version of \cref{lem:match-det} for general distributions. Unfortunately, we can only iterate \cref{lem:one-step-general} a very small number of times, as each iteration requires more and more submatrices with nonzero permanent\footnote{We wrote \cref{lem:one-step-general} in a non-quantitative way, so as written it can only be applied $O(1)$ times. With more care, it may be possible to apply it about $\log n$ times, but additional ideas are certainly required.}. So, we also need the following lemma, which serves as a starting point for applying \cref{lem:one-step-general}.

\begin{lemma}\label{lem:tao-vu}
For any $\varepsilon>0$, there is $S=S_\mu(\varepsilon)\in \mb N$  such that \[\Pr[\mc E(S,\lceil n^{0.99}\rceil)]\ge 1-\varepsilon\]for $n$ sufficiently large (in terms of $\varepsilon$ and $\mu$).%if we consider any $s=s_n$ tending to infinity with $n$, such that $s\le n^{o(1)}$, then there is $\ell=n^{1-o(1)}$ such that the following holds. With probability $1-o(1)$, there are disjoint subsets $I_1,\dots,I_\ell\subseteq\{1,\dots,n\}$ of size $s$, such that $\on{per}(A[-I_i])\ne 0$ for each $i$.
\end{lemma}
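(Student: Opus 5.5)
The plan is to run the row-by-row argument of \cref{lem:match-det-0} and \cref{lem:match-det} for a general entry distribution $\mu$. Unlike there, I will track a large \emph{family} of nonzero permanent-minors rather than a single one. The proof has the spirit of Tao and Vu's argument for permanents of random Bernoulli matrices, hence the label. Let $\rho=\max_{z}\mu(z)<1$; this is where we use that $\mu$ is supported on more than one value. The basic observation replaces uniformity. Suppose $\on{per}(A;I)\neq 0$ with $|I|=s$, and we reveal the next row $x_1,\dots,x_n$. For $i\in I$, \cref{fact:minor-expansion} gives
\[\on{per}(A;I\setminus\{i\})=\on{per}(A;I)\,x_i+(\text{a function of }x_k,\ k\notin I).\]
So conditionally on $(x_k)_{k\notin I}$, these $s$ values are independent, and each is zero with probability at most $\rho$. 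In particular, all children of $I$ fail with probability at most $\rho^s$. Iterating as in \cref{lem:match-det}, the probability that we ever lose all nonzero minors below level $S$ is at most $\sum_{s\ge S}\rho^s$. This is $\le\varepsilon/2$ once $S=S_\mu(\varepsilon)$ is large. That handles a single minor; the work is to obtain $\lceil n^{0.99}\rceil$ pairwise disjoint sets at level $S$.

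For this I will maintain a structured family with the following invariant. After revealing rows down to level $s$, there is a core $C$ and pairwise disjoint petals $P_1,\dots,P_m$ with all $I_j=C\cup P_j$ of size $s$ and $\on{per}(A;I_j)\neq 0$. The process has two phases.
\begin{itemize}
\item In the \emph{branching phase} ($s$ large), a set $I=C\cup P$ is refined by deleting an element $c$ of the core. There are many choices of $c$ across different rows, and a row succeeds with probability $\ge 1-\rho$ for each candidate. The aim is that, over a block of rows, a single set splits into many sets sharing a smaller core with fresh disjoint petals; the number of petals then grows geometrically. Concretely, I would group rows into blocks and, in each block, use the roughly $(1-\rho)$-fraction of successful deletions to double the number of petals.
\item In the \emph{shrinking phase}, once $m\ge n^{0.99}$ and the core has been consumed, the sets are genuinely disjoint. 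We then shrink each $I_j$ by deleting an element of its own petal, until $|I_j|=S$.
\end{itemize}
Since $s$ must pass from about $n$ down to $S$, and we need $m\cdot S\le n$, the parameters leave room for $m=n^{0.99}$ with room to spare. That slack is what the exponent $0.99$ buys.

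The probabilistic bookkeeping in the shrinking phase goes as follows. Conditioning sequentially on the coordinates outside $P_j$, each $I_j$ fails (all children zero) with probability $\le\rho^{s-|C|}$, and failures are counted by linearity of expectation. By Markov's inequality, the number of sets lost at level $s$ exceeds $m\rho^{(s-|C|)/2}$ with probability at most $\rho^{(s-|C|)/2}$. Summed over levels with petal size at least $S$, both the total fraction of sets lost and the total failure probability are small. So with probability $\ge 1-\varepsilon$ we keep at least $n^{0.99}$ of the sets.

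The main obstacle is the dependence between different sets in a single row. The ``lower-order'' term for $I_j$ involves the $x_k$ with $k$ in other petals $P_{j'}$. So the events for different $j$ are not independent, and in the branching phase the sets share core coordinates. I would first try to handle this by exposing the coordinates of a row petal by petal. Conditioning on everything except $P_j$ still leaves the $x_i$, $i\in P_j$, free, so the per-set failure bound $\rho^{|P_j|}$ holds conditionally on the past. This is enough for the Markov and first-moment bounds above, and it avoids needing independence. If the branching phase cannot be arranged cleanly this way, the fallback is to start directly with $n^{0.99}$ disjoint column blocks. We would then run the single-minor argument inside each block, using disjoint groups of rows of an appropriate square submatrix, and glue the results via \cref{fact:minor-expansion}.
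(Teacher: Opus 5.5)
There is a genuine gap. The lemma's entire content is producing $\lceil n^{0.99}\rceil$ \emph{pairwise disjoint} sets at level $S$, and your proposal never gives a working mechanism for this. Your per-step estimate is correct and is exactly the one the paper uses: given the columns outside $I$, the children of $I$ are independent and each is zero with probability at most $\rho$. Your shrinking-phase bookkeeping is also fine. The problem is the branching phase. Every set loses exactly one column per row, and every descendant of $C\cup P$ is a subset of $C\cup P$. So for two descendants to end up with disjoint petals, each column of $P\cup(C\setminus C')$ can survive in at most one of them. Each set is therefore forced to delete an essentially prescribed list of columns. Near the end of such a forced list, when only $u$ admissible columns remain, failure is only bounded by $\rho^u$, so the last few forced deletions each fail with probability up to $\rho$, which is not small. ``Use the $(1-\rho)$-fraction of successful deletions to double the petals'' does not address this, and the doubling step is as hard as the lemma itself.

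The fallback does not repair this. $\mathcal E(S,\ell)$ asks for minors of the \emph{same} matrix $A^{\uparrow S}$, each using all $n-S$ rows and all columns outside $I_i$. There is no room for ``disjoint groups of rows'', and nonzero permanents of diagonal blocks cannot be ``glued'' into a nonzero permanent of the full minor, because the off-diagonal entries contribute.

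The missing idea is a steering lemma, \cref{lem:growth} in the paper. Fix $J$ with $|J|=2T$ and run one greedy chain $I_0=\{1,\dots,n\}\supseteq I_1\supseteq\cdots$ that deletes a column outside $J$ whenever doing so keeps the minor nonzero.
\begin{enumerate}[label=(\roman*)]
\item While $|I_t\setminus J|\ge\delta T$, the failure probability is at most $\rho^{|I_t\setminus J|}$, which is summable.
\item In the last $T+\lfloor\delta T\rfloor$ steps, deletions inside $J$ are allowed as a fallback, with failure probability at most $\rho^{|I_t|}$.
\item A step that deletes inside $J$ while outside columns remain has probability at most $\rho$. Since at most $\lfloor\delta T\rfloor$ outside columns remain when this last phase starts, a binomial/Chebyshev bound shows $I_{n-T}\subseteq J$ with probability at least $1-\varepsilon$.
\end{enumerate}
One then fixes $2\lceil n^{0.99}\rceil$ disjoint blocks $J_i$ of size $2S$, runs a separate chain for each, and applies linearity of expectation and Markov's inequality to the number of bad blocks. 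In particular, the dependence between sets that you flagged as the main obstacle is a non-issue. No coherent family, sunflower structure, or petal-by-petal exposure is needed, because the chains for different blocks never have to be consistent with one another.
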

We defer the proof of \cref{lem:tao-vu} to \cref{sec:growth}. The proof is a straightforward adaptation of ideas of Tao and Vu~\cite{TV09}.

We can now prove a version of \cref{lem:match-det} for  general distributions.

\begin{corollary}
\label{lem:match-det-general}For any $s,\ell\in \mb N$ and $\varepsilon>0$, if $n$ is sufficiently large in terms of $s$, $\ell$, $\varepsilon$ and $\mu$, then
\[
\Pr[\mathcal{E}(s-1,\ell)]\ge\prod_{i=s}^{\infty}(1-p^{-i})-\varepsilon,
\]
and in particular 
\[
\Pr[\on{per}(A)=0]=1-\Pr[\mathcal{E}(0,1)]\le\alpha_{p}+\varepsilon.
\]
\end{corollary}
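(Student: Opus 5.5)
The plan is to chain \cref{lem:one-step-general} a bounded number of times, starting from the configuration supplied by \cref{lem:tao-vu}. Fix $s,\ell,\varepsilon$. First choose $S\ge s$ large enough that $\prod_{i=S+1}^{\infty}(1-p^{-i})\ge 1-\varepsilon/3$. Since $\prod_{i=s}^{\infty}(1-p^{-i})=\prod_{i=s}^{S}(1-p^{-i})\cdot\prod_{i=S+1}^\infty(1-p^{-i})$, it then suffices to prove $\Pr[\mc E(s-1,\ell)]\ge \prod_{i=s}^{S}(1-p^{-i})-2\varepsilon/3$. We may also enlarge $S$ to be at least the value $S_\mu(\varepsilon/3)$ from \cref{lem:tao-vu}. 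We need monotonicity here: if $S'\le S$, then $\mc E(S',m)\subseteq\mc E(S,m)$ whenever $m\cdot S\le n$. To see this, take disjoint sets $I_1,\dots,I_m$ witnessing $\mc E(S',m)$ and expand each nonzero $(n-S')\times(n-S')$ permanent along its last $S-S'$ rows via \cref{fact:minor-expansion}. Each expansion yields a nonzero $(n-S)\times(n-S)$ minor of $A^{\uparrow S}$ whose deleted column set contains $I_j$, but these supersets need not stay disjoint. So rather than rely on monotonicity, the cleaner route is to apply \cref{lem:tao-vu} directly at the chosen $S$, with $\varepsilon/3$ in place of $\varepsilon$, by taking $S=S_\mu(\varepsilon/3)$ and increasing it if necessary. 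This requires the lemma to hold for all sufficiently large $S$. I would check this from its proof (the Tao--Vu style argument should give it for every large $S$), and otherwise pad using the limit product bound.

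Next, define thresholds backwards: $\ell_{s-1}=\ell$, and for $i=s,\dots,S$ let $\ell_i=L_\mu(i,\ell_{i-1},\delta)$, where $\delta=\varepsilon/(3S)$. For $n$ large, $\lceil n^{0.99}\rceil\ge \ell_S$, so $\mc E(S,\lceil n^{0.99}\rceil)\subseteq \mc E(S,\ell_S)$, and hence $\Pr[\mc E(S,\ell_S)]\ge 1-\varepsilon/3$. Now apply \cref{lem:one-step-general} at each level $i$ from $S$ down to $s$. Since $\mc E(i,\ell_i)$ is determined by $A^{\uparrow i}$, averaging the conditional bound gives
\[\Pr[\mc E(i-1,\ell_{i-1})]\ge \Pr[\mc E(i-1,\ell_{i-1})\cap\mc E(i,\ell_i)]\ge (1-p^{-i}-\delta)\Pr[\mc E(i,\ell_i)].\]
Iterating, with all factors in $[0,1]$ and using $\prod(a_i-\delta)\ge\prod a_i-S\delta$, we obtain
\[\Pr[\mc E(s-1,\ell)]\ge\Big(\prod_{i=s}^{S}(1-p^{-i})-\varepsilon/3\Big)(1-\varepsilon/3),\]
which is at least the target. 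The number of steps is bounded in terms of $s,\ell,\varepsilon,\mu$ alone, so "$n$ sufficiently large" is uniform across all steps.

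For the "in particular" statement, take $s=1$, $\ell=1$. The event $\mc E(0,1)$ says exactly that $\on{per}(A)\ne0$, and $\prod_{i\ge1}(1-p^{-i})=1-\alpha_p$, which gives the bound $\alpha_p+\varepsilon$.

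The main obstacle is matching the level $S$ with \cref{lem:tao-vu}. That lemma only asserts the existence of some $S_\mu(\varepsilon)$, and the tail product must be small at that same level. If its $S$ can be taken arbitrarily large, there is no issue. If not, note that a smaller starting $S$ only drops factors $(1-p^{-i})$ with $i>S$, which I must absorb into $\varepsilon$. So I need the lemma valid at some $S\ge S_0$, where $S_0$ is chosen so that the tail product is at least $1-\varepsilon/3$, and I would verify this from the proof of \cref{lem:tao-vu}.
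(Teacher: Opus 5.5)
Your argument is correct and is essentially the paper's proof: take $S$ from \cref{lem:tao-vu}, define the thresholds $\ell_i$ backwards via $L_\mu$, and chain \cref{lem:one-step-general} from level $S$ down to $s-1$. The ``main obstacle'' you identify is illusory. Every factor $1-p^{-i}$ lies in $(0,1]$, so $\prod_{i=s}^{S}(1-p^{-i})\ge\prod_{i=s}^{\infty}(1-p^{-i})$, and truncating at a small $S$ only helps the lower bound; no tail condition on $S$ is needed (the paper simply takes $S=S_\mu(\varepsilon/2)$). The only requirement on $S$ is $S\ge s-1$. The paper uses this implicitly, and it is available because the proof of \cref{lem:growth} works for every sufficiently large $T$.
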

\begin{proof}
We use the quantities $S_\mu(\cdot)$ and $L_\mu(\cdot,\cdot,\cdot)$ in the statements of \cref{lem:tao-vu,lem:one-step-general}.

    Let $S=S_\mu(\varepsilon/2)$. Let $\ell_{s-1}=\ell$ and for $i\in \{s,\dots,S\}$ let $\ell_i=L_\mu(i,\ell_{i-1},\varepsilon/(2S))$. Then, by \cref{lem:tao-vu} we have $\Pr[\mc E(S,\ell_S)]\ge \Pr[\mc E(S,\lceil n^{0.99}\rceil)]\ge 1-\varepsilon/2$ for sufficiently large $n$, and hence
    \[
\Pr[\mathcal{E}(s-1),\ell]\ge \Pr[\mc E(S,\ell_S)]\prod_{i=s}^{S}\Pr[\mathcal{E}(i-1,\ell_{i-1})\,|\,\mathcal{E}(i,\ell_{i})]\ge\left(1-\frac \varepsilon 2\right)\prod_{i=s}^{S}\left(1-p^{-i}-\frac{\varepsilon}{2S}\right)\ge \prod_{i=s}^{\infty}(1-p^{-i})-\varepsilon,
\]
using \cref{lem:one-step-general}.% and the definitions of $S_\mu(\cdot)$ and $L_\mu(\cdot,\cdot,\cdot)$.
\end{proof}

Now, just as in the proof of \cref{thm:main}, we need to improve \cref{lem:match-det-general} by some special considerations for the last two rows. In order to apply the same proof strategy, we need the following anticoncentration inequality for ``robustly high rank'' linear maps, which may be of general interest. To state this lemma we need a bit more notation.

\begin{definition}\label{def:matrix-indexing}
    For a matrix $M\in \mb F^{m\times n}$ over a field $\mb F$, a set of rows $I\subseteq \{1,\dots,m\}$ and a set of columns $J\subseteq \{1,\dots,n\}$, let $M[I\times J]$ be the $|I|\times |J|$ submatrix obtained by deleting all rows not indexed by $I$ and deleting all columns not indexed by $J$.
\end{definition}

\begin{lemma}\label{lem:robust-high-rank-anticoncentration}
For any $r\in \mb N$ and $\varepsilon>0$, there is $K=K_\mu(r,\varepsilon)\in \mb N$ such that the following holds. Consider a matrix $M\in \mb F_p^{m\times n}$ and disjoint subsets $I_1,\dots,I_{K}\subseteq\{1,\dots,n\}$ with $\on{rank} M[\{1,\dots,m\}\times I_i]\ge r$ for each $i=1,\dots,K$. Let $\vec x\in \mb F_p^n$ be a random vector 
with independent $\mu$-distributed entries. Then
\[\Pr[ M\vec{x}=\vec 0]\le \frac 1{p^r}+\varepsilon.\]
\end{lemma}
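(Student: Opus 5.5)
The plan is to use discrete Fourier analysis on $\mb F_p^m$, reducing to a counting problem for vectors in the row space of $M$ that vanish on many of the blocks $I_i$. Write $e(t)=\exp(2\pi i t/p)$ for $t\in\mb F_p$ and $\varphi(t)=\mb E_{y\sim\mu}[e(ty)]$. Since $p$ is prime and $\mu$ is supported on more than one value, for every $t\ne 0$ the variable $ty$ is non-constant. The points $e(ty)$ are then distinct points of the unit circle, each carrying positive mass, so $|\varphi(t)|<1$. By finiteness of $\mb F_p$ there is $c=c_\mu>0$ with $|\varphi(t)|\le 1-c$ for all $t\ne0$.

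By orthogonality of characters,
\[\Pr[M\vec x=\vec 0]=p^{-m}\sum_{\vec\xi\in\mb F_p^m}\mb E\, e(\vec\xi\cdot M\vec x)=p^{-m}\sum_{\vec \xi}\prod_{j=1}^n\varphi\big((M^T\vec\xi)_j\big).\]
Each summand depends only on $\vec\eta=M^T\vec\xi$, and every $\vec\eta$ in the image $U=\on{im}(M^T)\subseteq\mb F_p^n$ (of dimension $R=\on{rank}M$) is hit exactly $p^{m-R}$ times. Therefore
\[\Pr[M\vec x=\vec 0]=p^{-R}\sum_{\vec\eta\in U}\prod_j\varphi(\eta_j)\le p^{-R}\sum_{\vec\eta\in U}(1-c)^{|\on{supp}\vec\eta|}.\]
This bound uses that the left-hand side is real and nonnegative, and applies the triangle inequality.

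Next, for each block $i$ let $U_i=\{\vec\eta\in U:\vec\eta|_{I_i}=0\}$. This is the kernel of the projection of $U$ onto the coordinates in $I_i$. The image of that projection is the row space of $M[\{1,\dots,m\}\times I_i]$, which has dimension at least $r$. Hence $\dim U_i\le R-r$, so $|U_i|\le p^{R-r}$.

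Fix a threshold $T$ and split $U$ into two parts.
\begin{itemize}
\item The first part consists of those $\vec\eta$ whose support meets at least $T$ of the disjoint blocks. Each such $\vec\eta$ has $|\on{supp}\vec\eta|\ge T$, so these contribute at most $p^{-R}\cdot p^R(1-c)^T=(1-c)^T$ in total.
\item The remaining $\vec\eta$ lie in at least $K-T$ of the subspaces $U_i$. Let $N$ be their number. Double counting gives $(K-T)N\le\sum_i|U_i|\le Kp^{R-r}$, so $N\le \frac{K}{K-T}p^{R-r}$. These terms each contribute at most $p^{-R}$, for a total of at most $\frac{K}{K-T}p^{-r}$.
\end{itemize}

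Combining the two parts,
\[\Pr[M\vec x=\vec 0]\le \frac{K}{K-T}\,p^{-r}+(1-c)^T.\]
It remains to choose the parameters. First take $T$ large enough that $(1-c)^T\le\varepsilon/2$. Then take $K$ large enough that $\frac{T}{K-T}p^{-r}\le \varepsilon/2$. This gives the bound $p^{-r}+\varepsilon$, with $K$ depending only on $r$, $\varepsilon$ and $\mu$ (through $c$).

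The main obstacle is that $m$ and $n$ are unbounded, so one cannot naively count the low-support frequencies. The key step that resolves this is passing from $\vec\xi$ to $\vec\eta\in U$. After that, the double-counting bound on vectors lying in many of the $U_i$ controls the number of low-support terms uniformly in $m$ and $n$. I would double-check this step carefully, in particular that $\dim U_i\le R-r$ and that no further structure among the $U_i$ is needed.
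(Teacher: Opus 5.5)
Your proof is correct, and it follows a genuinely different route from the paper's. The paper argues by a rank dichotomy. Set $\rho=\max_z\mu(z)$ and choose $R$ with $\rho^R\le p^{-r}$. If $\on{rank}M>R$, the paper simply invokes $\Pr[M\vec x=\vec 0]\le\rho^{\on{rank}M}$ (\cref{lem:simple-bound-linear-algebra}). Otherwise it passes to at most $R$ rows spanning the row space and takes $K=2p^RQ$, so that by pigeonhole some block $I_i$ consists entirely of column vectors each occurring at least $Q$ times. From that block it extracts $r$ linearly independent such vectors. It then bounds $\Pr[M\vec x=\vec 0]$ by the probability that $r$ independent sums $y_1,\dots,y_r$, each of $Q$ entries of $\vec x$, take the values forced by the remaining entries; each $y_j$ is nearly uniform by \cref{lem:approx-unif}.

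You instead write the probability exactly as $p^{-R}\sum_{\vec\eta\in U}\prod_j\varphi(\eta_j)$ over the row space $U$. The hypothesis then enters only through $\dim U_i\le R-r$, and double counting controls the low-support frequencies uniformly in $m$ and $n$. The points you wanted to double-check are all fine: each $\vec\eta\in U$ has exactly $p^{m-R}$ preimages, the image of $U\to\mb F_p^{I_i}$ is the row space of $M[\{1,\dots,m\}\times I_i]$, and the double count needs no relation among the $U_i$.

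Your route is shorter and self-contained. It needs no case split and neither auxiliary lemma; the only input about $\mu$ is $|\varphi(t)|\le 1-c$ for $t\ne 0$. It is also much stronger quantitatively. It gives $K=O_\mu(\varepsilon^{-1}\log(1/\varepsilon))$ uniformly in $r$, whereas the paper's $K=2p^RQ_\mu(\varepsilon/r)$ grows at least like $p^r$, since $\rho\ge 1/p$ forces $R\ge r$. This could matter for the footnote about iterating \cref{lem:one-step-general} more times. The paper's route, in exchange, stays purely elementary and combinatorial. It also reuses \cref{lem:approx-unif}, which is needed anyway in the deduction of \cref{thm:general}.
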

We defer the proof of \cref{lem:robust-high-rank-anticoncentration} to \cref{sec:approx-uniformity}. It is proved via a case distinction: either $M$ has high rank, in which case we can easily obtain good anticoncentration bounds for $M\vec x$ (see \cref{lem:simple-bound-linear-algebra}), or we can approximate $M\vec x$ in terms of $r$ independent random variables that are uniform over $\mb F_p$.

The second case in the proof of \cref{lem:robust-high-rank-anticoncentration} requires the following lemma (which we will also need for our deduction of \cref{thm:general}): sums of many independent random variables are approximately uniform. Let us say that a random variable $X\in \mb F_p$ is \emph{$\varepsilon$-almost-uniform} if $\big|\!\Pr[X=z]-1/p\big|\le \varepsilon$ for all $z\in \mb F_p$.

\begin{lemma}\label{lem:approx-unif}
    For any $\varepsilon>0$, there is $Q=Q_\mu(\varepsilon)\in \mb N$ such that the following holds. Consider $\vec h\in \mb F_p^n$, such that least $Q$ entries of $\vec h$ are nonzero, and consider a random vector $\vec x\in \mb F_p^n$
with independent $\mu$-distributed entries. Then $\vec h\cdot \vec x$ is $\varepsilon$-almost-uniform.
\end{lemma}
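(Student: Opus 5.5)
We prove \cref{lem:approx-unif} by Fourier analysis on $\mb F_p$. Let $\omega=e^{2\pi i/p}$. For $t\in\mb F_p$ let $\hat\mu(t)=\mathbb E_{x\sim\mu}[\omega^{tx}]$. Since $\vec x$ has independent entries, for every $t\in\mb F_p$ we have
\[
\mathbb E[\omega^{t(\vec h\cdot\vec x)}]=\prod_{j=1}^n\hat\mu(th_j).
\]
Fourier inversion then gives, for every $z\in\mb F_p$,
\[
\Pr[\vec h\cdot\vec x=z]-\frac1p=\frac1p\sum_{t\ne0}\omega^{-tz}\prod_{j=1}^n\hat\mu(th_j).
\]
It therefore suffices to show that $\bigl|\prod_j\hat\mu(th_j)\bigr|\le\varepsilon$ for every $t\ne0$. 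The absolute value of the right-hand side is then at most $\frac{p-1}{p}\varepsilon\le\varepsilon$.

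The key input is a uniform bound on the nontrivial Fourier coefficients of $\mu$. Set
\[
\lambda=\lambda_\mu=\max_{u\in\mb F_p\setminus\{0\}}|\hat\mu(u)|.
\]
We claim $\lambda<1$. Equality $|\hat\mu(u)|=1$ holds iff $\omega^{ux}$ is constant on the support of $\mu$. If $a,b$ are two distinct support points, this forces $\omega^{u(a-b)}=1$, i.e.\ $u(a-b)=0$ in $\mb F_p$. Since $p$ is prime and $a-b\ne0$, this gives $u=0$. Because $\mb F_p\setminus\{0\}$ is finite, the maximum is attained, so $\lambda<1$ depends only on $\mu$. This is the one place where we use that $p$ is prime and $\mu$ is non-constant.

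Now fix $t\ne0$. Whenever $h_j\ne0$ we have $th_j\ne0$, so $|\hat\mu(th_j)|\le\lambda$. For every other $j$ we use the trivial bound $|\hat\mu(\cdot)|\le1$. Since at least $Q$ entries of $\vec h$ are nonzero, this yields
\[
\Bigl|\prod_{j}\hat\mu(th_j)\Bigr|\le\lambda^{Q}.
\]
Choosing $Q=Q_\mu(\varepsilon)$ large enough that $\lambda^Q\le\varepsilon$ completes the argument.

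There is no serious obstacle here. The only point needing care is that $\lambda_\mu<1$, which is exactly the primality and non-degeneracy check above. Over a general $\mb F_q$ this step would fail, for instance when $\mu$ is supported on a subfield, which matches the paper's remark restricting \cref{thm:general} to prime fields.
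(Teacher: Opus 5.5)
Your proof is correct, and it takes a genuinely different route from the paper.

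The paper first uses pigeonhole to find $k=\lfloor Q/(p-1)\rfloor$ coordinates on which $\vec h$ takes the same nonzero value $h$. It then views the partial sums $x_1,\,x_1+x_2,\,\dots,\,x_1+\dots+x_k$ as a random walk on $\mb F_p$, and cites a Markov-chain mixing result \cite[Lemma~5.1]{KK01} to conclude that $x_1+\dots+x_k$ is $\varepsilon$-almost-uniform. Finally it conditions on the remaining coordinates and averages, using that $\vec h\cdot\vec x=z$ becomes an event of the form $x_1+\dots+x_k=\text{const}$.

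Your Fourier argument treats all nonzero coordinates of $\vec h$ at once. The single uniform bound $\lambda_\mu=\max_{u\neq 0}|\hat\mu(u)|<1$ is exactly where primality of $p$ and non-constancy of $\mu$ enter, and it is essentially the content of the cited mixing lemma. With it, the pigeonhole step and the conditioning step become unnecessary. You get a self-contained proof with an explicit bound,
\[
\Bigl|\Pr[\vec h\cdot\vec x=z]-\tfrac1p\Bigr|\le \tfrac{p-1}{p}\,\lambda_\mu^{Q},
\]
so any $Q$ with $\lambda_\mu^Q\le\varepsilon$ works (any $Q\ge1$ if $\lambda_\mu=0$), without losing the factor $p-1$ from pigeonholing.

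The paper's route avoids characters entirely and reduces everything to a statement about sums of i.i.d.\ $\mu$-distributed variables. The price is that the quantitative mixing is outsourced to a reference. Since $p$ is fixed, neither the factor $p-1$ nor the form of the dependence of $Q$ on $\varepsilon$ matters for how the lemma is used later in the paper.
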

\begin{proof}
Let $Q$ be very large in terms of $\varepsilon$ and $\mu$, and let $k=\lfloor Q/(p-1)\rfloor$. Without loss of generality we can assume that the first $k$ entries $h_1,\dots,h_{k}$ of $\vec h$ are equal to the same nonzero value $h\in \mb F_p$. Writing $\vec x=(x_1,\dots,x_n)$, the sequence of partial sums
\[x_1,\;x_1+x_2,\;\dots,\; x_1+\dots+x_{k}\] can be interpreted as a Markov chain, and some straightforward analysis (see e.g.\ \cite[Lemma~5.1]{KK01} and the remark thereafter) shows that $x_1+\dots+x_{k}$ is $\varepsilon$-almost-uniform, provided $k$ is sufficiently large in terms of $\varepsilon$ and $\mu$. 
Now, conditioning on any fixed outcomes for $x_{k+1},\dots,x_n$, for any $z\in \mb F_p$, we have 
\[\Pr\!\big[\vec h\cdot \vec x=z\,\big|\,x_{k+1},\dots,x_n\big]=\Pr\!\big[x_1+\dots+x_k=h^{-1}(z-h_{k+1}x_{k+1}-\dots-h_nx_n)\,\big|\,x_{k+1},\dots,x_n\big]\in \Bigg[\frac{1}{p}-\eps,\frac{1}{p}+\eps\Bigg].\]
Finally, averaging over all outcomes of $x_{k+1},\dots,x_n$ yields $\Pr[\vec h\cdot \vec x=z]\in [1/p-\eps,1/p+\eps]$ for all $z\in \mb F_p$, as desired.
\end{proof}

In the proof of \cref{thm:general}, we cannot apply \cref{lem:robust-high-rank-anticoncentration} directly. Instead, we need the following corollary.

\begin{corollary}\label{coro:robust-high-rank-hamming-anticoncentration}
    For any $r,\ell\in \mb N$ and $\varepsilon>0$, there is $D=D_\mu(r,\ell,\varepsilon)\in \mb N$ such that the following holds. Consider a matrix $M\in \mb F_p^{n\times n}$ and disjoint subsets $I_1,\dots,I_{D}\subseteq\{1,\dots,n\}$ of size $r$ with $\on{rank} M[I_i\times I_i]=r$ for each $i=1,\dots,D$. Let $\vec x\in \mb F_p^n$ be a random vector 
with independent $\mu$-distributed entries. Then
\[\Pr[ M\vec{x} \text{ has at most }\ell\text{ nonzero entries }]\le \frac 2{p^r}+\varepsilon.\]
\end{corollary}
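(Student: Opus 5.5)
The plan is to reduce \cref{coro:robust-high-rank-hamming-anticoncentration} to \cref{lem:robust-high-rank-anticoncentration}, applied to a row-submatrix of $M$ formed by a few of the blocks $I_i$. The key observation is about what happens when we restrict to a union of blocks. Take any indices $i_1,\dots,i_K$ and let $R=I_{i_1}\cup\dots\cup I_{i_K}$. Then for each $k$ the submatrix $M[R\times I_{i_k}]$ contains $M[I_{i_k}\times I_{i_k}]$, so
\[\on{rank} M[R\times I_{i_k}]\ge r.\]
The column sets $I_{i_1},\dots,I_{i_K}$ are disjoint. So \cref{lem:robust-high-rank-anticoncentration}, applied to the matrix $M[R\times\{1,\dots,n\}]$, gives
\[\Pr\big[(M\vec x)_R=\vec 0\big]\le p^{-r}+\varepsilon/2\]
as long as $K=K_\mu(r,\varepsilon/2)$. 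The point is that we never need to control $M[I_i\times I_j]$ for $i\ne j$.

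Next we use the hypothesis on the number of nonzero entries. If $M\vec x$ has at most $\ell$ nonzero entries, then at most $\ell$ of the $D$ disjoint blocks $I_1,\dots,I_D$ can contain a nonzero entry of $M\vec x$; call these blocks \emph{bad}. Choose a uniformly random $K$-subset $\{i_1,\dots,i_K\}\subseteq\{1,\dots,D\}$, independently of $\vec x$. For any fixed outcome of $\vec x$ in our event, a union bound shows that the probability that some chosen block is bad is at most $K\ell/D$. Otherwise $(M\vec x)_R=\vec 0$.

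Putting these together and averaging over the random choice of blocks,
\[\Pr\big[M\vec x\text{ has at most }\ell\text{ nonzero entries}\big]\le \mathbb E_{R}\Pr\big[(M\vec x)_R=\vec 0\big]+\frac{K\ell}{D}\le p^{-r}+\frac{\varepsilon}{2}+\frac{K\ell}{D}.\]
Here the expectation is over the random block choice, and in each term we apply the first paragraph for that fixed $R$. Taking $D=D_\mu(r,\ell,\varepsilon)\ge 2K_\mu(r,\varepsilon/2)\ell/\varepsilon$ bounds this by $p^{-r}+\varepsilon$, which is stronger than the claimed $2/p^r+\varepsilon$.

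The main point needing care is the averaging step. The random choice of blocks must be independent of $\vec x$, and the exchange of the two probabilities has to be done correctly: condition on $\vec x$ to bound the bad-block probability, then condition on $R$ to invoke \cref{lem:robust-high-rank-anticoncentration}. There is no real obstacle beyond this. The factor $2$ in the statement is simply slack relative to what this argument gives.
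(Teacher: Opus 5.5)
Your proof is correct and is essentially the paper's argument. The paper likewise applies \cref{lem:robust-high-rank-anticoncentration} to the row-submatrix indexed by a union of $K=K_\mu(r,\varepsilon/2)$ blocks, using $\on{rank} M[R\times I_{i_k}]\ge \on{rank} M[I_{i_k}\times I_{i_k}]=r$, and then averages over groups of blocks.

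The only difference is in that averaging step. The paper takes $D=2\ell K$, splits the blocks into exactly $2\ell$ fixed groups, and applies Markov's inequality (at least $\ell$ groups must vanish), which is where the factor $2$ comes from. Your random $K$-subset with $D\ge \max\{K,2K\ell/\varepsilon\}$ (equivalently, Markov over many groups) removes that factor and gives the sharper bound $p^{-r}+\varepsilon$. Note the $D\ge K$ condition, which is needed so that a $K$-subset of the blocks exists.
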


\begin{proof}
    Let $K=K_\mu(r,\varepsilon/2)$, where $K_\mu(\cdot,\cdot)$ is as in \cref{lem:robust-high-rank-anticoncentration}, and let $D=2\ell K$. Consider a matrix $M\in \mb F_p^{n\times n}$ and subsets $I_1,\dots,I_{D}\subseteq\{1,\dots,n\}$ as in the assumptions of the corollary. Now, for $j=1,\dots,2\ell$, let $M_j=M[(I_{(j-1)K+1}\cup \dots\cup I_{jK})\times \{1,\dots,n\}]$ be the $(rK)\times n$ submatrix of $M$ consisting of the rows with indices in $I_{(j-1)K+1}\cup \dots\cup I_{jK}$. Since $\on{rank} M_j[I_{(j-1)K+i}\times I_{(j-1)K+i}]=\on{rank} M[I_{(j-1)K+i}\times I_{(j-1)K+i}]=r$ for all $i=1,\dots,K$, the matrix $M_j$ together with the sets $I_{(j-1)K+1},\dots,I_{jK}$ satisfies the assumption of \cref{lem:robust-high-rank-anticoncentration}. Hence we obtain $\Pr[ M_j\vec{x}=\vec{0}]\le p^{-r}+\varepsilon/2$ for all $j=1,\dots,2\ell$.

    Now, if $M\vec{x}$ has at most $\ell$ nonzero entries, then we must have  $M_j\vec{x}=\vec{0}$ for at least $2\ell-\ell=\ell$ indices $j\in \{1,\dots,2\ell\}$. Since the expected number of indices $j\in \{1,\dots,2\ell\}$ with $M_j\vec{x}=\vec{0}$ is at most $(p^{-r}+\varepsilon/2)\cdot 2\ell=(2p^{-r}+\varepsilon)\ell$, by Markov's inequality we can conclude
    \[\Pr[ M\vec{x} \text{ has at most }\ell\text{ nonzero entries }]\le \Pr[ M_j\vec{x}=\vec{0} \text{ for at least }\ell\text{ different } j]\le \frac{(2p^{-r}+\varepsilon)\ell}{\ell}=\frac 2{p^r}+\varepsilon.\qedhere\]
\end{proof}

We now show how to deduce \cref{thm:general} from \cref{lem:one-step-general,lem:match-det-general,coro:robust-high-rank-hamming-anticoncentration,lem:approx-unif}. The structure of the proof is very similar to the proof of \cref{thm:main}, though the notation is a bit more complicated and we need to replace certain trivial estimates with the more sophisticated lemmas collected in this section so far.

\begin{proof}[Proof of \cref{thm:general}]
Consider any $\varepsilon>0$ and any $\ell\in \mb N$. We will prove that for $n$ sufficiently large in terms of $\eps$, $\ell$ and $\mu$, we have
\begin{equation}\Pr[\mc E(1,\ell)]\ge \prod_{i=2}^n(1-p^{-i})+(1-2\alpha_p)(p^{-2}-2p^{-3})-\varepsilon.\label{eq:general-goal}\end{equation}
To see that this suffices, consider $\ell=Q_\mu(\varepsilon)$ (where $Q_\mu(\cdot)$ is as in \cref{lem:approx-unif}). Conditioning on an outcome of $A^{\uparrow1}$ such that $\mc E(1,\ell)$ holds, by \cref{fact:minor-expansion} the permanent $\on{per}(A)$ can be interpreted as a linear combination of the entries in the last row of $A$, where at least $\ell$ coefficients are nonzero. Therefore, \cref{lem:approx-unif} yields
\[\Pr[\on{per}(A)\ne z]\ge \Pr[\on{per}(A)\ne z\,|\,\mc E(1,\ell)]\cdot \Pr[\mc E(1,\ell)]\ge \left(1-\frac1p-\varepsilon\right)\Pr[\mc E(1,\ell)]\ge (1-p^{-1})\Pr[\mc E(1,\ell)]-\varepsilon\]
and, together with \cref{eq:general-goal}, for sufficiently large $n$ we obtain
\[\Pr[\on{per}(A)\ne z]\ge (1-p^{-1})\prod_{i=2}^n(1-p^{-i})+(1-p^{-1})(1-2\alpha_p)(p^{-2}-2p^{-3})-2\varepsilon\ge 1-\alpha_p+(1-2\alpha_p)p^{-2}/5-2\eps.\]
Taking $\varepsilon=(1-2\alpha_p)p^{-2}/20$, this implies \cref{eq:separation-general} (with $\delta_p=(1-2\alpha_p)p^{-2}/10$). Furthermore, taking $\varepsilon=p^{-3}$, \cref{lem:approx-unif} yields
\[\left|\Pr[\on{per}(A)=z]-\frac 1p\right|\le \left|\Pr[\on{per}(A)=z\,|\,\mc E(1,\ell)]-\frac 1p\right|+\Pr[\mc E(1,\ell)^{\mathrm c}]\le \eps+\Pr[\mc E(1,\ell)^{\mathrm c}]=\Pr[\mc E(1,\ell)^{\mathrm c}]+p^{-3}\]
for any $z\in \mb F_p$. Using \cref{eq:general-goal} we can compute (again for sufficiently large $n$)
\begin{align*}
\Pr[\mc E(1,\ell)^{\mathrm c}]=1-\Pr[\mc E(1,\ell)]&\le 1-\prod_{i=2}^n(1-p^{-i})-(1-2\alpha_p)(p^{-2}-2p^{-3})+\varepsilon\\
&\le \sum_{i=2}^n p^{-i}-(p^{-2}-2p^{-3})+2\alpha_pp^{-2}+2p^{-3}\\
&\le 2p^{-3}+2p^{-3}+2\alpha_pp^{-2}+2p^{-3}\le 10p^{-3},
\end{align*}
using that $\alpha_p=1-\prod_{i=1}^\infty (1-p^{-i})\le \sum_{i=1}^\infty p^{-i}\le 2p^{-1}$. Thus, we obtain
\[\left|\Pr[\on{per}(A)=z]-\frac 1p\right|\le 10p^{-3}+p^{-3}=11p^{-3}\]
for any $z\in \mb F_p$, proving \cref{eq:asymptotic-general} with $C=11$.

So, we just need to prove \cref{eq:general-goal}. Define the matrix $H\in \mb F_p^{n\times n}$ and the graph $G$ as in the proof of \cref{thm:main} (encoding the permanents of $(n-2)\times (n-2)$ submatrices in $A^{\uparrow2}$). Let $D_\mu(\cdot,\cdot,\cdot)$ be as in \cref{coro:robust-high-rank-hamming-anticoncentration}, and let $D=D_\mu(3,\ell,\varepsilon/4)$. By Markov's inequality and \cref{lem:match-det-general}, the probability that our graph $G$ has at least $\binom n 2-n^2/4-3Dn$ non-edges is at most $2\alpha_p+o(1)$. If this occurs, then we can repeatedly apply Mantel's theorem (\cref{thm:mantel}) to find $D$ vertex-disjoint triangles. (After each time we apply Mantel's theorem to find a triangle, we delete all edges incident to the vertices of that triangle before the next application of Mantel's theorem.)

Just as in the proof of \cref{thm:main}, via \cref{lem:matrix}, a triangle in $G$ yields a size-$3$ set $I\subseteq\{1,\dots,n\}$ such that $\on{rank}(H[I\times I])=3$. Write $\mc F^*$ for the event that there are $D$ disjoint size-$3$ sets $I_1,\dots,I_{D}\subseteq\{1,\dots,n\}$ such that $\on{rank}(H[I_i\times I_i])=3$ for each $i=1,\dots,D$; we have just proved that 
\[\Pr[\mc F^*]\ge 1-2\alpha_p+o(1).\]

For any outcome of $A^{\uparrow2}$ such that $\mc F^*$ holds, we can apply \cref{coro:robust-high-rank-hamming-anticoncentration}  to the matrix $H\in \mb F_p^{n\times n}$ and find that  with probability at least $1-2p^{-3}-\eps/4$ the vector $H\vec{x}$ has at least $\ell$ nonzero entries, where $\vec{x}\in F_p^{n}$ denotes the random vector consisting of the entries in the second-last row of $A$. Now, the entries of $H\vec{x}$ are precisely the permanents of the $(n-1)\times (n-1)$ submatrices of $A^{\uparrow1}$, and having at least $\ell$ such nonzero entries means that the event $\mc E(1,\ell)$ holds. Thus, we obtain
\[\Pr[\mc E(1,\ell)\,|\,\mc F^*]\ge 1-2p^{-3}-\eps/4.\]

Now, let $L_\mu(\cdot,\cdot,\cdot)$ be as in \cref{lem:one-step-general} and write $\mc E^*(2)=\mc E(2,L_\mu(2,\ell,\varepsilon/4))$. Then by \cref{lem:match-det-general} we have
\[\Pr[\mc E^*(2)]\ge \prod_{i=3}^\infty(1-p^{-i})-\varepsilon/4\]
if $n$ is sufficiently large.

Combining everything, and applying \cref{lem:one-step-general}, we can conclude
\begin{align*}
    \Pr[\mc E(1,\ell)]&\ge \Pr[\mc E(1,\ell)\,|\,\mc F^*]\Pr[\mc F^*]+\Pr[\mc E(1,\ell)\,|\,\mc E^*(2)\setminus \mc F^*]\Pr[\mc E^*(2)\setminus \mc F^*]\\
   &\ge (1-2p^{-3}-\varepsilon/4)\Pr[\mc F^*]+(1-p^{-2}-\varepsilon/4)\Pr[\mc E^*(2)\setminus \mc F^*]\\
   &\ge (1-2p^{-3})\Pr[\mc F^*]+(1-p^{-2})\Pr[\mc E^*(2)\setminus \mc F^*]-\varepsilon/2\\
   &\ge (1-2p^{-3})\Pr[\mc F^*]+(1-p^{-2})\left(\prod_{i=3}^\infty(1-p^{-i})-\varepsilon/4-\Pr[\mc F^*]\right)-\varepsilon/2\\
   &\ge\prod_{i=2}^\infty(1-p^{-i})+(p^{-2}-2p^{-3})\Pr[\mc F^*]-3\varepsilon/4\\
    &\ge\prod_{i=2}^\infty(1-p^{-i})+(p^{-2}-2p^{-3})(1-2\alpha_p+o(1))-3\varepsilon/4,
    %&\ge (1-p^{-3}-\varepsilon/3)\Pr[\mc F^*]+(1-p^{-2}-\varepsilon/3)\left(\prod_{i=3}^\infty(1-p^{-i})-\varepsilon/3-\Pr[\mc F^*]\right),
\end{align*}
and \cref{eq:general-goal} follows for sufficiently large $n$.
\end{proof}

\subsection{Growing many matrices with nonzero permanents}\label{sec:growth}
In this section we prove \cref{lem:tao-vu}, adapting a strategy of Tao and Vu~\cite{TV09}. Recall the definitions in \cref{def:per-submatrix}. First, the following lemma is suitable to find a \emph{single} submatrix with nonzero permanent.

\begin{lemma}\label{lem:growth}
For any $\varepsilon>0$, there is $T=T_\mu(\varepsilon)\in\mb N$ such that the following
holds. Consider any subset $J\subseteq\{1,\dots,n\}$ of size $|J|=2T$.
Then,
\[
\Pr[\text{there is a}\text{ subset }I\subseteq J\text{ of size }|I|=T\text{ with }\on{per}(A;I)\ne0]\ge 1-\varepsilon.
\]
\end{lemma}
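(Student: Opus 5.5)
The plan is to reveal the rows of $A^{\uparrow T}$ one at a time and greedily grow a single column set $S_k$ of size $k$ such that the $k\times k$ submatrix formed by the first $k$ rows and the columns in $S_k$ has nonzero permanent. We start from $S_0=\emptyset$, whose permanent is $1$. The goal is to end with $S_{n-T}=\{1,\dots,n\}\setminus I$ for some $I\subseteq J$ with $|I|=T$. Equivalently, every column outside $J$ must be added, and exactly $T$ columns of $J$ must be added.

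The basic one-step estimate is as in the proof of \cref{lem:match-det-0}. Condition on the first $k$ rows with $\on{per}(S_k)\ne0$, and let $x_1,\dots,x_n$ be row $k+1$. For each column $c\notin S_k$, \cref{fact:minor-expansion} gives
\[
t_c:=\on{per}(S_k\cup\{c\})=\on{per}(S_k)\,x_c+(\text{a function of }(x_{c'})_{c'\in S_k}).
\]
So after further conditioning on $(x_{c'})_{c'\in S_k}$, the variables $t_c$ for $c\notin S_k$ are independent, and each is an affine function of $x_c$ with nonzero slope. Writing $\eta=1-\max_z\mu(z)>0$ (positive because $\mu$ is non-constant), each $t_c$ vanishes with probability at most $1-\eta$.

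The greedy rule at step $k+1$ is as follows. Let $u_k$ be the number of columns outside $J$ not yet in $S_k$, and $w_k$ the number of columns of $J$ not yet in $S_k$. If some non-$J$ column $c$ has $t_c\ne0$, add it. Otherwise, if some $J$-column has $t_c\ne0$, add it (a ``$J$-step''). Otherwise the procedure fails. Given the past, a $J$-step or failure occurs with probability at most $(1-\eta)^{u_k}$, and failure with probability at most $(1-\eta)^{u_k+w_k}$. The procedure succeeds if it never fails and it performs at most $T$ $J$-steps while $u>0$. After that it performs only $J$-steps until $n-T$ rows have been used, which is consistent since $|S_{n-T}|=n-T$ forces exactly $T$ columns of $J$ to be used in total.

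Phase 1 (while $u\ge1$) is controlled level by level. At level $u$ the process stays until a non-$J$ success, which happens at each step with probability at least $1-(1-\eta)^u$. So the expected number of steps spent at level $u$ is at most $1/(1-(1-\eta)^u)\le 1/\eta$. Summing the conditional probabilities of $J$-steps and of failures over steps (a standard expected-count/optional-stopping bound, valid under this adaptive conditioning) gives two estimates. First, the expected number of $J$-steps in Phase 1 is at most $\sum_{u\ge1}(1-\eta)^u/\eta\le \eta^{-2}$. Second, as long as at most $T$ $J$-steps have occurred we have $w_k\ge T$, so the expected number of failures in Phase 1 is at most $\eta^{-1}\sum_{u\ge1}(1-\eta)^{u+T}\le \eta^{-2}(1-\eta)^T$.

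By Markov's inequality, Phase 1 uses more than $T/2$ $J$-steps with probability at most $2\eta^{-2}/T$. Phase 2 (when $u=0$) has at most $T$ remaining steps, each with $w\ge T+1$, so it fails with probability at most $T(1-\eta)^{T+1}$. Altogether the total failure probability is $O_\eta(1/T+T(1-\eta)^T)$, which is below $\varepsilon$ once $T=T_\mu(\varepsilon)$ is large.

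The main obstacle is the bookkeeping in the first phase. Near the end of Phase 1, when $u$ is small, $J$-steps become likely, and one must check that their total expected number stays bounded independently of $n$. This rests on the geometric decay of $(1-\eta)^u$. One must also make sure the failure probability per step, as small as $(1-\eta)^{T}$ once $u$ is small, is not summed over all $n$ steps but weighted by the $O(1/\eta)$ expected time per level.
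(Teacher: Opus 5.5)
Your proof is correct. It runs the same process as the paper: grow the column set row by row, always prefer a column outside $J$, and use the one-step estimate from \cref{lem:match-det-0}. Where you differ is in controlling how many columns of $J$ get used up before the outside columns are exhausted.

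The paper forbids $J$-steps during the first $n-T'$ steps, where $T'=2T+\lfloor\delta T\rfloor$. This holds $\lfloor\delta T\rfloor$ outside columns in reserve, so the termination probabilities $\rho^{n-t-2T}$ stay summable. In the last $T'-T$ steps it bounds each bad step crudely by $\rho$, dominates their number by $\on{Binomial}(T'-T,\rho)$, and applies Chebyshev. This needs the tuning $\rho(1+\delta)+\delta<1$ so that the mean number of bad steps falls below $T$.

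You instead allow $J$-steps from the start and exploit the geometric decay $(1-\eta)^u$. The time spent at each level $u$ is dominated by a geometric variable, so the expected total number of $J$-steps is at most $\sum_{u\ge1}(1-\eta)^u/\eta\le\eta^{-2}$. This bound is independent of both $n$ and $T$, and Markov finishes. Your route avoids the reserve and the parameter $\delta$. It also shows that only $O_\mu(1)$ columns of $J$ are wasted in expectation, whereas the paper's bound on bad steps is of order $\rho T$. The price is an adaptive expected-count argument (summing conditional probabilities against $\mathcal F_k$-measurable indicators) instead of a plain union bound and binomial domination.

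Your failure analysis can be simplified. For every relevant $k\le n-T-1$ we have $u_k+w_k=n-k\ge T+1$, so failure at step $k+1$ has probability at most $(1-\eta)^{n-k}$. A single union bound then gives total failure probability at most $(1-\eta)^{T+1}/\eta$, with no need to split into phases or track how many $J$-steps have occurred.
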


\begin{proof} Let $\rho=\max_{z\in\mb F_{p}}\mu(z)<1$ (recalling that $\mu$
is supported on at least two values), and choose $0<\delta<1$
small enough such that $1-\rho(1+\delta)>\delta$. Furthermore choose $T>2(1+\delta)/(\eps\delta^2)$ to be sufficiently large such that $\rho^{\delta T}/(1-\rho)<\eps/4$.

We consider a random process that attempts to construct a nested sequence
of sets $I_{0}\supseteq\dots\supseteq I_{n-T}$ (starting with $I_{0}=\{1,\dots,n\}$,
and removing an element at each step), in such a way that $\on{per}(A;I_{t})\ne0$
for all $t=0,\dots,n-T$, and $I_{n-T}\subseteq J$. If this process succeeds,
then we can take $I_{n-T}$ as the desired set $I$.

Specifically, the process is defined as follows. Let $T'=2T+\lfloor \delta T\rfloor$. For each step $t=1,\dots,n-T'$, we define $I_t$ as follows:
\begin{itemize}
\item If there is $i\in I_{t-1}\setminus J$ such that $\on{per}(A;I_{t-1}\setminus\{i\})\ne0$, take the minimum such $i$ and
set $I_{t}=I_{t-1}\setminus\{i\}$.
\item Otherwise, terminate the process.
\end{itemize}
For each step $t=n-T'+1,\dots, n-T$, we define $I_t$ as follows:
\begin{itemize}
\item If there is $i\in I_{t-1}\setminus J$ such that $\on{per}(A;I_{t-1}\setminus\{i\})\ne0$, take the minimum such $i$ and
set $I_{t}=I_{t-1}\setminus\{i\}$.
\item Otherwise, if there is any $i\in I_{t-1}$ such that $\on{per}(A;I_{t-1}\setminus\{i\})\ne0$, take the minimum such $i$ and
set $I_{t}=I_{t-1}\setminus\{i\}$.
\item Otherwise, terminate the process.
\end{itemize}
That is to say: in each of the first $n-T'$ steps we make sure to
remove an element outside $J$. In each of the subsequent
steps we still try to remove an element outside $J$ if possible,
but if not we allow ourselves to remove some other element. Note that we always have $|I_t|=n-t$ and $\on{per}(A;I_{t})\ne0$ whenever the set $I_t$ is defined, and also note that $I_t$ only depends on the outcome of $A^{\uparrow (n-t)}$ (i.e., only on the first $t$ rows of the matrix $A$).

It suffices to show that with probability at least $1-\varepsilon/2$
this process does not terminate in any step before defining $I_{n-T}$, and to show
that\footnote{If we terminate in some step (so $I_{n-T}$ is not defined), we say that the event $\{I_{n-T}\not \subseteq J\}$ does \emph{not} occur.} $I_{n-T}\nsubseteq J$ with probability at most $\varepsilon/2$.

First, note that for any outcome
of $A^{\uparrow (n-t)}$ for which $\on{per}(A;I_{t})\ne0$, we have
\[
\Pr[\text{the process terminates at step }t+1\,|\,A^{\uparrow (n-t)}]\le\begin{cases}
\rho^{|I_{t}\setminus J|}\le\rho^{n-t-2T} & \text{if }t<n-T'\\
\rho^{|I_{t}|}=\rho^{n-t}& \text{if }n-T'\le t< n-T
\end{cases}
\]
The reasoning is basically the same as in the proof of \cref{lem:match-det-0}. Indeed,
write $A=(a_{i,j})$, condition on an outcome of $A^{\uparrow (n-t)}$,
and additionally condition on $a_{t+1,i}$ for all $i\notin I_{t}$.
Then, the quantities $\on{per}(A;I_{t}\setminus\{i\})$, for
$i\in I_{t}$, become independent random variables (each only depending on the outcome of $a_{t+1,i}$), which each have
probability at most $\rho$ of taking any particular value (in particular,
of taking the value zero).

By a union bound, it follows that the probability that the process terminates in some step (i.e., the probability that $I_{n-T}$ does not get defined) is at most

\[
\sum_{t=0}^{n-T'-1}\rho^{n-t-2T}+\sum_{t=n-T'}^{n-T-1}\rho^{n-t}=\sum_{j=T'+1-2T}^{n-2T}\rho^{j}+\sum_{j=T+1}^{T'}\rho^{j}\le \frac{\rho^{\lfloor \delta T\rfloor+1}+\rho^{T+1}}{1-\rho}\le \frac{2\rho^{\delta T}}{1-\rho}\le\frac{\varepsilon}{2}.
\]

Now we study the probability that $I_{n-T}\nsubseteq J$. For $t= n-T'+1,\dots,n-T$, say step $t$ is \emph{bad} if $I_{t}\setminus J=I_{t-1}\setminus J\ne \emptyset$ (i.e., if $I_{t-1}\nsubseteq J$, and the element we remove from $I_{t-1}$ in step $t$ lies in $J$). Then for any outcome of $A^{\uparrow t}$ with $I_{t-1}\nsubseteq J$, we have
\[
\Pr[t\text{ is bad}\,|\,A^{\uparrow t}]\le\rho^{|I_{n-t}\setminus J|}\le\rho,
\]
and for any outcome of $A^{\uparrow t}$ with $I_{t-1}\subseteq J$ we have $\Pr[t\text{ is bad}\,|\,A^{\uparrow t}]=0\le \rho$. Thus, the number of bad steps is stochastically dominated by a $\on{Binomial}(T'-T,\rho)$-distributed random variable.

Now, if $I_{n-T}\not\subseteq J$, there must be an element $h\in I_{n-T}\not\subseteq J$. This means that $I_{t}\not\subseteq J$ for all $t= n-T'+1,\dots,n-T$. Furthermore, at most $|I_{n-T'}\setminus J|\le |I_{n-T'}|-|J|=T'-2T=\lfloor \delta T\rfloor$ elements outside $J$ get removed during the $T'-T$ steps $n-T'+1,\dots,n-T$. This means that at least $T'-T-\lfloor \delta T\rfloor=T$ of these steps must be bad.

Thus, writing $B\sim\on{Binomial}(T'-T,\rho)$ (which has expectation $\rho(T'-T)$ and standard deviation at
most $\sqrt{T'-T}\le \sqrt{(1+\delta)T}$), we have
\begin{align*}
\Pr[I_{n-T}\not\subseteq J] & \le\Pr[\text{at least }T\text{ of the steps }n-T'+1,\dots,n-T\text{ are bad}]\\
 & \le\Pr[B\ge T]\\
 & \le\Pr[B\ge\rho(T'-T)+\delta T]\le\frac{(1+\delta)T}{\delta^2 T^2}=\frac{(1+\delta)}{\delta^2 T}\le \frac{\varepsilon}{2},
\end{align*}
noting that $\rho(T'-T)+\delta T\le \rho(1+\delta) T+\delta T\le T$ by definition of $T'$ and of $\delta$, and using Chebyshev's inequality.
\end{proof}

Now we show how to deduce \cref{lem:tao-vu}.

\begin{proof}[Proof of \cref{lem:tao-vu}]
 %Let $\rho$ be as in the statement of \cref{lem:growth}, and l
 Let $T_\mu(\cdot)$ be as in \cref{lem:growth} and let $S=T_\mu(\varepsilon/2)$.
Let $\ell=2\lceil n^{0.99}\rceil$
and fix disjoint subsets $J_{1},\dots,J_{\ell}\subseteq\{1,\dots,n\}$
of size $2S$. Say $J_{i}$ is \emph{good} if there is a
subset $I_i\subseteq J_i$ of size $|I_i|=S$ with $\on{per}(A;I_i)\ne0$, and say that $J_{i}$ is \emph{bad} otherwise. Then each $J_{i}$
is bad with probability at most $\varepsilon/2$ by
\cref{lem:growth}. By Markov's inequality, the probability that more than half
of $J_{1},\dots,J_{\ell}$ are bad is at most $2(\varepsilon/2)=\varepsilon$. If this does not happen, i.e.\ if at least $\ell/2=\lceil n^{0.99}\rceil$ of the sets $J_{1},\dots,J_{\ell}$ are good, then $\mc E(S,\lceil n^{0.99}\rceil)$ occurs.
\end{proof}

\subsection{Approximate uniformity for linear maps}\label{sec:approx-uniformity}
In this section we prove \cref{lem:robust-high-rank-anticoncentration} and use it to deduce \cref{lem:one-step-general}. First, we need the following simple lemma showing anticoncentration bounds for $M\vec x$ in terms of the rank of $M$ and the entry distribution of $\vec x$.

\begin{lemma}\label{lem:simple-bound-linear-algebra}
Let $\rho=\max_{z\in\mb F_{p}}\mu(z)<1$, and let $\vec{x}\in F_{p}^n$
be a vector with independent $\mu$-distributed entries. Then for any matrix $M\in\mb F_{p}^{m\times n}$ and any
$\vec{z}\in\mb F_{p}^{m}$, we have
\[
\Pr[M\vec{x}=\vec{z}]\le\rho^{\on{rank} M}.
\]
\end{lemma}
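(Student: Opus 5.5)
The plan is to reduce to a full-rank square submatrix and then reveal coordinates one at a time. Write $r=\on{rank} M$ and fix $\vec z\in\mb F_p^m$. If $M\vec x=\vec z$ has no solution the probability is $0$, so assume otherwise. Choose $r$ rows of $M$ that are linearly independent, forming $M'\in\mb F_p^{r\times n}$. Since $M\vec x=\vec z$ implies $M'\vec x=\vec z'$, where $\vec z'$ is the corresponding restriction of $\vec z$, it suffices to show $\Pr[M'\vec x=\vec z']\le\rho^r$. Because $M'$ has rank $r$, it has $r$ linearly independent columns, indexed by some set $J$ with $|J|=r$. The square submatrix $M'[\{1,\dots,r\}\times J]$ is then invertible.

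Next, condition on an arbitrary outcome of the entries $x_i$ with $i\notin J$. Under this conditioning the equation becomes $M'[\{1,\dots,r\}\times J]\,\vec x_J=\vec w$ for some fixed vector $\vec w$, where $\vec x_J$ denotes the restriction of $\vec x$ to $J$. Since the square matrix is invertible, this forces $\vec x_J$ to equal one specific vector $\vec u\in\mb F_p^J$. The entries $(x_j)_{j\in J}$ are independent and $\mu$-distributed, so
\[
\Pr[\vec x_J=\vec u]=\prod_{j\in J}\mu(u_j)\le\rho^{r}.
\]
Averaging over the conditioning on $(x_i)_{i\notin J}$ gives $\Pr[M\vec x=\vec z]\le\rho^r$.

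There is no real obstacle. The only point needing care is to make the reduction to an invertible $r\times r$ submatrix explicit, so that the event pins down all $r$ conditioned coordinates uniquely, rather than just asserting that "rank $r$ constraints cost a factor $\rho$ each." An alternative route is to do column operations so that $M'$ contains an identity block on $J$. The conditioning argument above is cleaner, however, because it never mixes the independent $\mu$-distributed coordinates.
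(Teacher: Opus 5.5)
Your proposal is correct and follows essentially the same approach as the paper, which also conditions on the $n-r$ free variables of the system $M\vec x=\vec z$ and observes that the remaining $r$ coordinates are then uniquely determined, each costing a factor of $\rho$. Your explicit choice of an invertible $r\times r$ submatrix simply makes precise what the paper means by ``free variables.''
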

\begin{proof}
    We can interpret $M\vec{x}=\vec{z}$ as a system of linear equations. Letting $r=\on{rank} M$, this system has exactly $n-r$ free variables. Conditioning on any entries for the entries of $\vec{x}$ corresponding to these free variables, there is at most one choice for each of the remaining $r$ entries such that $M\vec{x}=\vec{z}$. For each of these entries, the probability of attaining the appropriate value is at most $\rho$, so overall $\Pr[M\vec{x}=\vec{z}]$ is at most $\rho^r=\rho^{\on{rank} M}$.
\end{proof}

Now, we are ready to prove \cref{lem:robust-high-rank-anticoncentration}.

\begin{proof}[Proof of \cref{lem:robust-high-rank-anticoncentration}]
We may assume without loss of generality $\eps<1/2$. Let $\rho=\max_{z\in\mb F_{p}}\mu(z)<1$ be the maximum point probability of $\mu$, and let $R\in \mb N$ be such that $\rho^R\le p^{-r}$. Furthermore, let $Q_\mu(\cdot)$ be as in \cref{lem:approx-unif}, let $Q=Q_\mu(\varepsilon/r)$ and let $K=2p^{R}Q$.

First, consider the case that $\on{rank} M> R$. Then by \cref{lem:simple-bound-linear-algebra} we have
\[\Pr[M\vec{x}=\vec{0}]\le\rho^{\on{rank} M}\le \rho^R\le p^{-r}+\eps,\]
as desired.

So let us from now on assume that $\on{rank}(M)\le R$. Then there is a set $I\subseteq\{1,\dots,m\}$ of size $|I|\le R$, such that the rows indexed by $I$ generate the row space of the matrix $M$. In particular, for any $i=1,\dots,K$ the row space of the matrix $M[\{1,\dots,m\}\times I_i]$ is generated by the rows indexed by $I$, and so our assumption $\on{rank} M[\{1,\dots,m\}\times I_i]\ge r$ implies $\on{rank} M[I\times I_i]\ge r$. Thus, for each $i=1,\dots,K$, the submatrix $M[I\times I_i]$ has $r$ linearly independent columns.

Let $S\subseteq\mb F_{p}^{I}$ be the set of ``popular'' vectors that appear at least $K/(2p^{R})=Q$ times as columns of $M[I\times \{1,\dots,n\}]$. All but at most $p^{|I|}\cdot K/(2p^{R})\le p^{R}\cdot K/(2p^{R})=K/2$ columns of $M[I\times \{1,\dots,n\}]$ are popular, so there is an index $i\in\{1,\dots,K\}$ such that all columns of $M[I\times I_i]$ are popular vectors. This means in particular that there are $r$ linearly independent popular vectors $\vec{v}_{1},\dots,\vec{v}_{r}\in S\subseteq \mb F_{p}^{I}$. After reordering the columns of $M$, suppose without loss of generality that the first
$Q$ columns of $M[I\times \{1,\dots,n\}]$ are equal to $\vec{v}_{1}$, the second $Q$ columns are equal to $\vec{v}_{2}$, and so on. Then, whenever we have $M\vec{x}=\vec{0}$, we obtain $M[I\times \{1,\dots,n\}]\vec{x}=\vec{0}$, and hence
\[(x_1+\dots+x_Q)\vec{v}_{1}+\dots+(x_{(r-1)Q+1}+\dots+x_{rQ})\vec{v}_{r}+M[I\times\{rQ+1,\dots,n\}](x_{rQ+1},\dots,x_{n})^{T}=M\vec{x}=\vec{0}.\]
Thus, writing $\vec{z}=-M[I\times\{rQ+1,\dots,n\}](x_{rQ+1},\dots,x_{n})^{T}$ and $y_{j}=x_{(j-1)Q+1}+\dots+x_{jQ}$ for $j=1,\dots,r$, we obtain
\[
\Pr[M\vec{x}=\vec{0}]\le\Pr\!\big[M[I\times \{1,\dots,n\}]\vec{x}=\vec{0}\big]\le \Pr[y_{1}\vec{v}_{1}+\dots+y_{r}\vec{v}_{r}=\vec{z}].
\]
Now, since $\vec{v}_{1},\dots,\vec{v}_{r}\in F_{p}^{I}$ are linearly independent, for any outcome of $\vec{z}$ there is at most  one possible outcome of $y_{1},\dots,y_{r}\in\mb F_{p}$ such that $y_{1}\vec v_{1}+\dots+y_{r}\vec v_{r}=\vec{z}$.
Note that $y_{1},\dots,y_r$ are independent, and by \cref{lem:approx-unif} each $y_i$ is $(\varepsilon/r)$-almost-uniform, so we may conclude that
\[
\Pr[M\vec{x}=\vec{0}]\le\Pr[y_{1}\vec{v}_{1}+\dots+y_{r}\vec{v}_{r}=\vec{z}]\le \left(\frac{1}{p}+\frac{\varepsilon}{r}\right)^{r}\le\frac{1}{p^r}+\varepsilon.\qedhere
\]
\end{proof}

Now we show how to deduce \cref{lem:one-step-general}.
\begin{proof}[Proof of \cref{lem:one-step-general}]
    For large $L$, fix an outcome of $A^{\uparrow s}$ satisfying $\mc E(s,L)$, so there are disjoint size-$s$ sets $I_1,\dots,I_L$, each satisfying $\on{per}(A;I_i)\ne 0$. For each $j\in\{1,\dots,L\}$, let $\mathcal{F}_{j}$
be the event that $\on{per}(A;I_{j}\setminus\{i\})=0$ for all $i\in I_{j}$,
and let $X$ be the number of $j\in \{1,\dots,L\}$ for which $\mathcal{F}_{j}$ occurs.
Our objective is to prove that, for sufficiently large $L$, we have
\[
\Pr[X>L-\ell]\le\frac{1}{p^{s}}+\varepsilon.
\]
We will do this by studying a sufficiently high moment of $X$, using \cref{lem:robust-high-rank-anticoncentration}.

Write $\vec{x}$ for the $(n-s+1)$-th row of $A$. First, note that
(given our outcome of $A^{\uparrow s}$), for each $j\in\{1,\dots,L\}$,
the quantities $\on{per}(A;I_{j}\setminus\{i\})$, for $i\in I_{j}$,
depend linearly on $\vec{x}$. We define an $s\times n$ matrix $M_{j}$
to record this linear dependence (with rows indexed by
$I_{j}$, and with columns indexed by $\{1,\dots,n\}$),
as follows. For $i\in I_{j}$ and $h\notin I_{j}$, write $I_{j}-i+h$
to denote the size-$s$ set $(I_{j}\setminus\{i\})\cup\{h\}$. Then,
for $i\in I_{j}$ and $h\in\{1,\dots,n\}$, define the $(i,h)$-entry
of $M_{j}$ to be
\[
\begin{cases}
\on{per}(A;I_{j}-i+h) & \text{if }h\notin I_{j},\\
\on{per}(A;I_{i}) & \text{if }i=h,\\
0 & \text{if }h\in I_{j}\setminus\{i\}.
\end{cases}
\]
It is not hard to see (using \cref{fact:minor-expansion}) that for any $i\in I_j$, the $i$-entry of $M_{j}\vec{x}$
is precisely $\on{per}(A;I_{j}\setminus\{i\})$. In particular, $\mathcal{F}_{j}$ occurs if and
only if $M_{j}\vec{x}=\vec{0}$. Also note that the $s\times s$ submatrix $M_j[I_j\times I_j]$ is a diagonal matrix with all entries on the diagonal being $\on{per}(A;I_{i})\ne 0$, so we have $\on{rank} M_j[I_j\times I_j]=s$.

Now, let $K_\mu(\cdot,\cdot)$ be as in \cref{lem:robust-high-rank-anticoncentration} and let $K=K_\mu(s,\varepsilon/2)$. For
each subset $J\subseteq\{1,\dots,L\}$ with $|J|=K$, let $M_{J}\in\mb F_{p}^{(Ks)\times n}$
be the matrix obtained by concatenating the matrices $M_{j}$ for
$j\in J$ below each other (i.e.\ the matrix whose rows are the rows of all the matrices $M_{j}$ for
$j\in J$). Then, $M_{J}$ satisfies the conditions of \cref{lem:robust-high-rank-anticoncentration} with $r=s$ (since $\on{rank} M_J[\{1,\dots,Ks\}\times I_j]\ge \on{rank} M_j[I_j\times I_j]=s$ for all $j=1,\dots,K$), and therefore
\[
\Pr\Bigg[\bigcap_{j\in J}\mathcal{F}_{j}\Bigg]=\Pr[M_{J}\vec{x}=\vec{0}]\le\frac{1}{p^{s}}+\frac{\varepsilon}{2}.
\]
This implies that
\[
\mb E\left[\binom{X}{K}\right]\le\left(\frac{1}{p^{s}}+\frac{\varepsilon}{2}\right)\binom{L}{K},
\]
so by Markov's inequality,
\[
\Pr[X>L-\ell]\le\Pr\left[\binom{X}{K}\ge \binom{L-\ell}{K}\right]\le\left(\frac{1}{p^{s}}+\frac{\varepsilon}{2}\right)\binom{L}{K}\binom{L-\ell}{K}^{-1}\le\frac{1}{p^{s}}+\varepsilon
\]
for sufficiently large $L$.
\end{proof}

\bibliographystyle{amsplain_initials_nobysame_nomr}
\bibliography{main.bib}
\end{document}